    \def\MR#1{}
\theoremstyle{plain}
\newtheorem{Theorem}{Theorem}[section]
\newtheorem{Lemma}[Theorem]{Lemma}
\newtheorem{Corollary}[Theorem]{Corollary}
\newtheorem{Proposition}[Theorem]{Proposition}
\theoremstyle{definition}
\newtheorem{Assumptions and Discussion}[Theorem]{Assumptions and Discussion}
\newtheorem{Example}[Theorem]{Example}
\newtheorem{Definition}[Theorem]{Definition}
\newtheorem{Remark}[Theorem]{Remark}
\newtheorem{Notation}[Theorem]{Notation}
\newtheorem{Setting}[Theorem]{Setting}
\theoremstyle{remark}
\newtheorem*{acknowledgment*}{Acknowledgment}
\def\Char{\operatorname{char}}
\def\deg{\operatorname{deg}}
\def\dim{\operatorname{dim}}
\def\ini{\operatorname{in}}
\def\ker{\operatorname{ker}}
\def\KK{{\mathbb K}}
\def\Lift{\operatorname{Lift}}
\def\NN{{\mathbb N}}
\def\part{\operatorname{part}}
\def\Quad{\operatorname{Quad}}
\def\RR{{\mathbb R}}
\def\sgn{\operatorname{sgn}}
\def\sL{\mathscr{L}}
\def\sort{\operatorname{sort}}
\def\Sym{\operatorname{Sym}}
\def\trdeg{\operatorname{tr.deg}}
\def\ZZ{{\mathbb Z}}
\newcommand\bda{{\bm a}}
\newcommand\bdb{{\bm b}}
\newcommand\bdc{{\bm c}}
\newcommand\bdD{{\bm D}}
\newcommand\bdd{{\bm d}}
\newcommand\bde{{\bm e}}
\newcommand\bdg{{\bm g}}
\newcommand\bdk{{\bm k}}
\newcommand\bdl{{\bm \ell}}
\newcommand\bdu{\bm{u}}
\newcommand\bdx{{\bm x}}
\newcommand\bfc{\mathbf{c}}
\newcommand\bfC{\mathbf{C}}
\newcommand\bfL{\mathbf{L}}
\newcommand\bfT{\mathbf{T}}
\newcommand\bfw{\mathbf{w}}
\newcommand\bfx{\mathbf{x}}
\newcommand\bfX{\mathbf{X}}
\newcommand\bfZ{\mathbf{Z}}
\newcommand\calA{\mathcal{A}}
\newcommand\calC{\mathcal{C}}
\newcommand\calD{\mathcal{D}}
\newcommand\calF{\mathcal{F}}
\newcommand\calG{\mathcal{G}}
\newcommand\calI{\mathcal{I}}
\newcommand\calJ{\mathcal{J}}
\newcommand\calK{\mathcal{K}}
\newcommand\calL{\mathcal{L}}
\newcommand\calR{\mathcal{R}}
\newcommand\calU{\mathcal{U}}
\def\frakS{\mathfrak{S}}
\definecolor{MyGreen}{RGB}{34,136,51}
\begin{document}

\title{Blowup algebras of determinantal modules}

\author[Kuei-Nuan Lin, Yi-Huang Shen]{Kuei-Nuan Lin and Yi-Huang Shen}

\thanks{2020 {\em Mathematics Subject Classification}.
    Primary 13C40, 
    13A30, 
    13P10, 
    Secondary 14N07, 
    14M12 
}

\thanks{Keyword: Multi-Rees algebra, determinantal, Cohen--Macaulay, normality, singularity}

\address{Department of Mathematics, The Penn State University, McKeesport, PA, 15132, USA}
\email{linkn@psu.edu}

\address{
School of Mathematical Sciences, University of Science and Technology of China, Hefei, Anhui, 230026, P.R.~China}
\email{yhshen@ustc.edu.cn}

\begin{abstract}
    We study the blowup algebras of the modules that are direct sums of ideals generated by either maximal minors of a ladder matrix or unit interval determinantal ideals. Specifically, we determine Gr\"{o}bner bases for the presentation ideals of multi-Rees algebras and their special fiber rings. Our analysis reveals that the multi-blowup algebras are Koszul Cohen--Macaulay normal domains, possess rational singularities in characteristic zero, and are F-rational in positive characteristic.
\end{abstract}

\maketitle

\section{Introduction}

Rees algebras, which have been extensively discussed in \cite{vasconcelos1994arithmetic}, occupy a central position in commutative algebra and algebraic geometry. Their applications extend to fields such as elimination theory, geometric modeling, chemical reaction networks, and algebraic statistics \cite{CWL, Cox, CLS, Likelihood}. Equally significant are the determinantal varieties in these fields, as highlighted in \cite{BCRV}.

The classical concept of the Rees algebra of an ideal can be extended to the Rees algebra of a module. When $M$ is a finitely generated module with a rank over a Noetherian ring $R$, the \emph{Rees algebra} $\mathcal{R}(M)$ is defined as the symmetric algebra of $M$ modulo its $R$-torsion submodule (see \cite{EHU}). In the case where $M$ is either the module of differentials or the module of top differential forms of the homogeneous coordinate ring of a projective $\KK$-variety $Y$, the \emph{special fiber ring} $\KK \otimes \mathcal{R}(M)$ is the homogeneous coordinate ring of the tangential variety or the Gauss image of $Y$, respectively (see \cite{SUVModule}).

In this note, we look at the special case where $M = \bigoplus_{i=1}^r I$ is the direct sum of a homogeneous ideal $I$ in the polynomial ring $R = \KK[x_1, \ldots, x_n]$ over a field $\KK$. In this context, the Rees algebra
is also known as the \emph{multi-Rees algebra} of the ideal $I$. Similar to the classical case, it can be defined as the multi-graded $R$-subalgebra 
\[
    \mathcal{R}(M) \coloneqq R[I t_1, \ldots,I t_r ]
    \subseteq R[t_1, \ldots, t_r],
\]
with auxiliary variables $t_1, \ldots, t_r$ introduced. This algebra is the homogeneous coordinate ring of the blowup along the subschemes defined by these ideals. The associated \emph{special fiber ring}, denoted by $\mathcal{F}(M)$, corresponds to the image resulting from the blowup map.
These two blowup algebras can be represented as the quotients of polynomial rings:
\[
    \mathcal{R} (M) \cong S[T_1,\ldots, T_\mu]/\mathcal{J} \qquad \text{and} \qquad \mathcal{F} (M) \cong \mathbb{K}[T_1,\ldots, T_\mu]/\mathcal{K},
\]
where $\mu$ is the total number of minimal generators of $M$.
One significant challenge in studying the multi-Rees algebras is determining the implicit equations of the presentation ideals $\mathcal{J}$ and $\mathcal{K}$. Cox, Lin, and Sosa showed that a comprehensive understanding of $\mathcal{J}$ and $\mathcal{K}$ can illuminate the equilibrium solutions of chemical reaction networks \cite{CLS}. 
The problem of identifying the generating equations of $\mathcal{J}$ and $\mathcal{K}$ poses a substantial challenge in the fields of elimination theory and geometric modeling. For further details, please see, for example, \cite{Cox}.

It is readily seen that $\mathcal{F}(M)$ is isomorphic to the Segre product $\mathcal{F}(I) \#_{\KK} \KK[t_1,\ldots, t_r]$. Therefore, to establish the Koszul, normal, and similar properties for $\calF(M)$, it suffices to do so for $\calF(I)$; see for instance \cite{GWGraded,MR789425}. One may also argue that $\calR(M)$ is isomorphic to the Segre product $\calR(I) \#_{R} R[t_1,\ldots, t_r]$. Notice that this Segre product is taken over $R$, where the graded rings have $R$ in the $0$-th degree. Since this is not about the Segre product of two graded rings defined over a field, its treatment does not fall into the framework of the classical papers \cite{GWGraded,MR789425}.

The study of determinantal ideals is another complex undertaking. Although the multi-Rees algebras for direct sums of maximal minors of a generic matrix have been scrutinized by Burns and Conca \cite{BCresPowers}, the problem of implicit equations for the multi-Rees algebras of direct sums of general determinantal ideals remains unresolved.

In this study, we will investigate the multi-blowup algebras of two classes of determinantal ideals. Specifically, we will extend the findings of Celikbas et al.~\cite[Theorem 1.1]{CDFGLPS} and Almousa et al.~\cite[Theorem 4.5]{ALL1} to the realm of multi-blowup algebras.
The conclusions we present are grounded in the understanding of the blowup algebras 
associated with maximal minors of generic matrices.

Consider a generic matrix $\mathbf{X}$, where the entries are distinct variables. Let $I$ denote the ideal of all its maximal minors. It is well-known that the special fiber ring $\mathcal{F}(I)$ of $I$ forms the coordinate ring of a Grassmannian variety. Moreover, both $\mathcal{R}(I)$ and $\mathcal{F}(I)$ are Cohen--Macaulay algebras.
The normality of $\mathcal{F}(I)$ then follows from its Cohen--Macaulay property, given the smoothness of the Grassmannian variety.
Moreover, the powers and symbolic powers of $I$ coincide, implying that $\mathcal{R}(I)$ is normal. Furthermore, the deformation theorem shows that both \( R(I) \) and \( F(I) \) have quadratic Gröbner bases.
Consequently, both blowup algebras are Koszul, see details in \cite[Chapter 6]{BCRV}. Then by \cite{Blum}, all powers of the ideal $I$ have a linear resolution.

Let $L=I_n(\bfX_{\bfL})$ be the ideal generated by the maximal minors of a ladder matrix $\bfX_{\bfL}$ (see Definition \ref{def:Ladder}). 
When the initial ideal of $L$ can be associated with a Ferrers diagram, the blowup algebras have been studied by \cite[Proposition 4.3]{CDFGLPS}. Here,
we will investigate both $\mathcal{R} (\oplus_{i=1}^r L)$ and $\mathcal{F} (\oplus_{i=1}^r L)$. Additionally, let $U$ be a unit interval determinantal ideal (refer to Definition \ref{def:unit}). Our work will take care of $\mathcal{F} (\oplus_{i=1}^r U)$ and  $\mathcal{R} (\oplus_{i=1}^r U)$.

It is important to point out that although the ideal $L$ is associated with the ladder matrix $\bfX_{\bfL}$, it is not the ladder determinantal ideals of minors in the literature.
Recall that this notion was introduced and studied by Abhyankar~\cite{MR926272}, and subsequently by other authors. For example, Conca and De Negri showed that the Rees algebra of a ladder determinantal ideal coincides with the symmetric algebra~\cite[Theorem~5.2]{CD}.
In contrast, even when $r=1$, the Rees algebra of $L$ may not be equal to its symmetric algebra.
In a recent contribution, Ramos and Simis identified the presentation ideal of the special fiber of cofactors for a given square ladder \cite[Corollary 3.11]{RSMatrix}. 
However, their work pertains to a completely different class of ideals and does not encompass considerations of multi-Rees algebras. 

The following is an outline of the road map. 
We first establish basic terminology and notation in \Cref{sec:prelim}. In \Cref{fibertoRees}, we provide a $\KK$-algebra isomorphism between the multigraded Segre product of $\calR(J)$ and $\calR(\bigoplus_{i=1}^r J)$ for a monomial ideal $J$. We then use the toric fiber product to extract a Gr\"{o}bner basis of the Segre product in \Cref{GBMultiRessInitial_FiberProduct}.
After that,
we start with a natural distributive lattice structure from the index set of the maximal minors, whose Hibi ring is isomorphic to the toric ring related to the initial ideal of the maximal minors of the generic matrix. We prove that a non-empty sublattice induces an isomorphism of its Hibi ring to the associated toric subring in \Cref{CanUseRestriction}. The structure of the ladder matrix or the unit interval characteristic ensures that we have the sublattice property (\Cref{lem:LexClosed} and \Cref{minMaxExist}). Therefore, we obtain quadratic Gr\"{o}bner bases for the fiber cones of the direct sum of the initial ideals of $L$ and $U$ (see Theorems \ref{FiberLadder} and \ref{FiberUnit}). 
In contrast to the constructive methodologies proposed by Almousa et al.~\cite{ALL1} and Celikbas et al.~\cite{CDFGLPS}, 
this approach is inspired by the work of De Negri \cite{DeNegri} for the fiber ring of a lexsegment ideal.
Our proof strategy is considerably more concise and efficient than those in \cite{ALL1} and \cite{CDFGLPS} with regard to 
the special fiber of the ideal cases, while also recovering their results. 
As a matter of fact, in \cite{ALL1}, the authors use the so-called sorting order and clique-sorted structure to represent the comparable pairs in the Hibi ideal, which demands cumbersome notations and more involved index tracking. 
Subsequently, we leverage the $\ell$-exchange property to derive the Gr\"{o}bner basis of the Rees algebra of the initial ideals of maximal minors of ladder matrices. We then use the Segre product to obtain the Gr\"{o}bner basis of the multi-Rees algebra of the direct sum of these initial ideals (see \Cref{fibertoRees}, \Cref{GBMultiRessInitial_FiberProduct}, and \Cref{thm:InitialMultiRees}). 

In the final sections of our work, we employ the \textsc{Sagbi} (Khovanskii) technique to derive the defining equations of the multi-Rees algebras and their fibers of the original determinantal modules.
Consequently, we can demonstrate that 
presentation ideals of these multi-blowup algebras have
quadratic Gr\"{o}bner bases (see Theorems \ref{FiberLadder} and \ref{thm:sagbiLiftsUnit}). 
Moreover, these algebras are Koszul normal Cohen--Macaulay domains, and we provide a classification of their singularities (see Corollaries \ref{cor:propertiesReesLadder} and \ref{thm:sagbiLiftsUnit}).

The main focus of this work is to provide an explicit description of the presentation ideals of the multi-Rees algebras and their corresponding fibers for the original determinantal modules. A byproduct of our approach is that many important properties of these algebras emerge naturally using the distributive lattice structure, allowing Hibi ring theory to be applied. 
In contrast, the existing literature about finding properties of Rees algebra often derives these properties using powers of ideals or modules; see for example, \cite{BCresPowers} and \cite{LinReesModule}.

The explicit descriptions of presentation ideals have practical significance in real-world applications. Our approach, based on Gr\"{o}bner bases and \textsc{Sagbi} bases, enables the 
application of Gr\"{o}bner deformation and \textsc{Sagbi} deformation to compute numerical invariants, such as regularity and multiplicity, of these algebras via the initial ideals of their defining ideals; see, for example, \cite{wicaII} and \cite{LSCMNormal}.

Our main results are summarized below.

\begin{Theorem}
    Suppose that $\bfX_{\bfL}$ is a  ladder matrix obtained from a generic matrix $\bfX$. Let $L=I_n(\bfX_{\bfL})$ be the 
    determinantal ideal generated by the maximal minors of $\bfX_{\bfL}$. Meanwhile, assume that $U$ is a unit interval determinantal ideal arising from some of the maximal minors of $\bfX$. Let $M_L=\oplus_{i=1}^r L$ and $M_U=\oplus_{i=1}^r U$ be the modules associated to those ideals. 
    \begin{enumerate}[a]
        \item The presentation ideals of $\mathcal{F}(M_L)$ and $\mathcal{F}(M_U)$ are generated by the inherited Pl\"ucker relations, respectively. These relations form a quadratic Gr\"obner basis of the ideal they generate.
        \item The multi-Rees algebra $\mathcal{R}(M_L)$ is of fiber type, meaning that its presentation ideal is generated by the relations of the symmetric algebra of $M_L$ and by the Pl\"ucker relations on the maximal minors of $\bfX_{\bfL}$. Moreover, these equations form a Gr\"obner basis of the ideal they generate.
        \item The blowup algebras $\mathcal{R}(M_L)$, $\mathcal{F}(M_L)$, $\mathcal{R}(M_U)$, and $\mathcal{F}(M_U)$ are Cohen--Macaulay normal domains. Furthermore, they have rational singularities in characteristic zero and are $F$-rational in positive characteristic. 
        \item The blowup algebras $\mathcal{R}(M_L)$, $\mathcal{F}(M_L)$, $\mathcal{R}(M_U)$, and $\mathcal{F}(M_U)$ are Koszul.
            In particular, the powers of $L$ have a linear resolution.
        \item The natural algebra generators of $\mathcal{R}(M_L)$, $\mathcal{F}(M_L)$, $\mathcal{R}(M_U)$, and $\mathcal{F}(M_U)$ are \textsc{Sagbi} bases of the algebras they generate.
    \end{enumerate}
\end{Theorem}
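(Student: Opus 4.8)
The proof realizes the road map sketched above: transfer the whole problem to a combinatorial one by passing to initial ideals, settle that monomial case, and then lift the conclusions back by a \textsc{Sagbi}-deformation argument. Concretely, fix a diagonal term order on $R=\KK[x_1,\dots,x_n]$, so that $\ini(L)$ and $\ini(U)$ are equigenerated squarefree monomial ideals whose minimal generators are indexed by the admissible (``ladder-shaped'', respectively ``unit-interval'') maximal minors of $\bfX$. The first task is the monomial analogue of part~(a): that the fiber cones $\mathcal{F}(\oplus_{i=1}^{r}\ini(L))$ and $\mathcal{F}(\oplus_{i=1}^{r}\ini(U))$ are presented by the Pl\"ucker relations inherited from the ambient Grassmannian, and that these form a quadratic Gr\"obner basis under the sorting order. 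Rather than constructing the relations directly as in \cite{ALL1,CDFGLPS}, I would follow De Negri's restriction technique for lexsegment ideals \cite{DeNegri}: realize each such fiber cone as the toric coordinate ring cut out from the Pl\"ucker ideal of the full generic Grassmannian by the subring on the admissible Pl\"ucker variables, and then show by a straightening / standard-monomial argument that the restricted quadratic Pl\"ucker relations already generate this restriction and form a Gr\"obner basis. This yields Theorems~\ref{FiberLadder} and~\ref{FiberUnit}.

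For the monomial multi-Rees algebra $\mathcal{R}(\oplus_{i=1}^{r}\ini(L))$ I would invoke the strong $\ell$-exchange property of Lin and Shen \cite{LinShen_nD}: one verifies that $\ini(L)$ has this property with respect to the sorting order, and their machinery then produces an explicit Gr\"obner basis of the presentation ideal consisting of the linear relations defining the symmetric algebra together with the inherited Pl\"ucker relations; in particular this algebra is of fiber type. Together with the previous paragraph, this settles the monomial versions of~(a) and~(b).

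Next comes the lifting. Choose term orders on the presentation polynomial rings $S[T_1,\dots,T_\mu]$ and $\KK[T_1,\dots,T_\mu]$ that refine the diagonal order; one then shows that the natural generators of $\mathcal{R}(M_L)$, $\mathcal{F}(M_L)$ and $\mathcal{F}(M_U)$ — the products of the admissible maximal minors with the $t_i$, respectively the images of those minors — form \textsc{Sagbi} bases, so that their initial algebras are precisely the monomial blowup algebras just analysed; this is part~(e) (Theorems~\ref{thm:reesLadder} and~\ref{thm:sagbiLiftsUnit}). The \textsc{Sagbi}-lifting lemma then transports the Gr\"obner bases: $\ini(\calK)$ and $\ini(\calJ)$ equal the presentation ideals of the monomial fiber cone and of the monomial multi-Rees algebra, so the inherited Pl\"ucker relations (respectively the symmetric-algebra relations together with the Pl\"ucker relations) are Gr\"obner bases of $\calK$ and $\calJ$; this gives~(a) and~(b), and in particular all three algebras are $G$-quadratic. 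Part~(d) is then immediate, since a $G$-quadratic algebra is Koszul, and the linear resolutions of the powers of $L$ follow from Blum's theorem \cite{Blum} in the multigraded form of Bruns and Conca \cite{BCresPowers}. For~(c): the monomial blowup algebras are normal toric rings (normality being read off from the squarefree quadratic initial ideals of their toric presentations), hence Cohen--Macaulay normal domains with rational singularities in characteristic zero and $F$-rational in positive characteristic; by the standard combination of \textsc{Sagbi} flatness, the good deformation behaviour of $F$-rationality, and the implication ``$F$-rational type $\Rightarrow$ rational singularities'', these properties descend to $\mathcal{R}(M_L)$, $\mathcal{F}(M_L)$ and $\mathcal{F}(M_U)$ (Corollaries~\ref{cor:propertiesReesLadder} and~\ref{thm:sagbiLiftsUnit}).

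The main obstacle is the combinatorial core: proving that no relations beyond the restriction of the generic Pl\"ucker relations are needed for the fiber cone of $\oplus_i\ini(L)$ (and of $\oplus_i\ini(U)$), and verifying the strong $\ell$-exchange property for $\ini(L)$. The ladder and unit-interval shapes make the admissible index sets and their behaviour under the sorting operator delicate, and controlling this bookkeeping is where the genuine work lies; by comparison, the \textsc{Sagbi} lifting and the ring-theoretic consequences are essentially formal once the term orders are arranged compatibly.
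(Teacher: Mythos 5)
Your proposal is correct and follows essentially the same route as the paper: reduce to the initial ideals, present the monomial fiber cones by restricted Pl\"ucker relations via a De Negri--style restriction/standard-monomial argument, handle the monomial multi-Rees algebra with the strong $\ell$-exchange property of Lin--Shen, and then lift everything back by \textsc{Sagbi} deformation, with normality, Cohen--Macaulayness and the singularity statements coming from squarefree initial ideals, Hochster, Boutot and Hochster--Huneke together with the Conca--Herzog--Valla transfer results. The only cosmetic discrepancy is your reference to ``the sorting order'': the paper's standard tableaux are explicitly not sorted in Sturmfels' sense, and the Gr\"obner basis is taken with respect to a tailor-made reverse lexicographic order on the $T$-variables, but this does not change the substance of the argument.
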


\section{Preliminaries on the multi-Rees algebra and its special fiber}
\label{sec:prelim}

We start by going over the basic facts and notions that we will employ in this work. 
In particular, we will take the following convention for the notations.

\begin{Notation}
    \begin{enumerate}
        \item To avoid confusion, the set of positive integers is denoted by $\ZZ_+$, and the set of non-negative integers is denoted by $\NN$.

        \item If $p$ and $q$ are two positive integers such that $p\le q$, we denote the set $\{p,p+1,\dots,q-1,q\}$ by $[p,q]$. If additionally $p=1$, we will write it as $[q]$ for brevity.

    \end{enumerate}
\end{Notation}

\subsection{Blowup algebras}
\label{subsection:blowup_algebras}
In this subsection, we explain in more detail the presentation ideals of the blowup algebras, tailored to our determinantal setting.

Let $m,n$, and $r$ be three positive integers. Suppose that $\bfX=(x_{i,j})$ is a generic $n\times m$ matrix with the entries $x_{1,1},\dots,x_{n,m}$ being indeterminates over a fixed field $\KK$.
Let $R = \KK[\bfX]$ be the polynomial ring over $\KK$ in these variables.
Suppose that $I$ is an equigenerated ideals in $R$, and $\{f_{1}, \dots,f_{\mu}\}$ is a minimal homogeneous generating set of $I$. In the following, we will consider the blowup algebras associated with $M=\bigoplus_{j=1}^r I$.

To describe the presentations of the \emph{Rees algebra} 
\[
    \mathcal{R}(M)\coloneqq R[I t_1, \ldots,I t_r ]
    =\bigoplus_{a_i \in \mathbb{N}} I^{a_1}\cdots I^{a_r}t_1^{a_1}\cdots t_r^{a_r} 
    \subseteq R[t_1,\ldots,t_r],
\] 
and the \emph{special fiber ring} 
\[
    \mathcal{F}(M) \coloneqq \calR(M) \otimes_{R} \KK \cong 
    \KK[f_{i}t_j\mid i\in [\mu],j\in [r]]
\] 
of this module, 
we introduce two polynomial rings over the field $\KK$: 
\[
    R[\bfT]\coloneqq R[T_{i,j}\mid i\in [\mu],j\in [r]] \qquad\text{and}\qquad  \KK[\bfT] \coloneqq \KK[T_{i,j}\mid i\in [\mu],j\in [r]].
\]
From them, we have
two surjective ring homomorphisms:
\begin{align*}
    \phi: R[\bfT] \to \mathcal{R}(M) \quad \text{and} \quad \psi: \KK[\bfT] \to \mathcal{F}(M).
\end{align*}
Here, for each $l\in [n]$, $ k\in [m]$, $i\in [\mu_j]$ and $j\in [r]$, we require that $
\phi(x_{l,k})= x_{l,k}$, and $
\phi(T_{i,j})=\psi(T_{i,j})=f_{i,j} t_j$. 

\begin{Definition}
    \begin{enumerate}[a]
        \item The kernel ideals $\mathcal{J}\coloneqq \ker(\phi)$ and $\mathcal{K}\coloneqq \ker (\psi)$ are called the \emph{presentation ideals} of $\mathcal{R}(M)$ and $\mathcal{F}(M)$, respectively. Alternatively, they are sometimes referred to as 
            the \emph{Rees ideal} and the \emph{special fiber ideal}, respectively. 
            Meanwhile, the \emph{defining equations} of an algebra form a system of generators for its presentation ideal.
        \item If $\mathcal{K}S$ together with the relations of the symmetric algebra 
            $\Sym(M)$
            generate $\mathcal{J}$, then $\calR(M)$ is \emph{of fiber type}.  
    \end{enumerate}
\end{Definition}

From now on, in this paper, all unspecified tensor products $\otimes$ are taken over the common ground field $\mathbb{K}$.

Let $A$ and $B$ be two graded rings over $\KK$.
To recover the presentation ideal of the Segre product of $A$ and $B$
using the presentation ideals of $A$ and $B$ is generally not as straightforward as one might normally expect, see for example \cite{KRSegre, Yang_Baxter_algebras}.
In multigraded case, Kahle and Rauh show in \cite{KRSegre} that 
the Segre product is presented by the toric fiber product of two ideal, under some assumptions. The case that we are interested in this paper can take advantage of this observation. As a result, we establish notations for Rees algebra of initial ideals and recall some of the work related to the toric fiber product and Segre product here.

In the context under consideration, the set $\{f_{1}, \dots, f_{\mu}\}$ is also assumed to be a Gr\"obner basis of degree $d$ for the ideal $I$ with respect to a term order $\tau$. Consequently, we may define two additional ring homomorphisms

\[
    \phi^\ast: R[T_1,\ldots, T_{\mu}] \longrightarrow \mathcal{R}(\ini_{\tau} (I)) \quad \text{and} \quad \psi^\ast: \mathbb{K}[T_1,\ldots, T_{\mu}] \longrightarrow \mathcal{F}(\ini_{\tau} (I))
\]
for the Rees algebra $\mathcal{R}(\ini_{\tau} (I))$ and the special fiber ring $\mathcal{F}(\ini_{\tau} (I))$. Here, for all $l\in [n]$ and $k\in [m]$, let $\phi^\ast(x_{l,k}) = x_{l,k}$ and $\phi^\ast(T_{i}) =\psi^\ast(T_{i})=\ini_{\tau}(f_{i})t$.
Then $\calJ ^\ast=\ker(\phi^\ast)$ and $\calK^\ast=\ker(\psi^\ast)$ are presentation ideals of $\mathcal{R}(\ini_{\tau} (I))$ and $\mathcal{F}(\ini_{\tau} (I))$, respectively.

Let $\calA$ be a vector configuration of $2$ integer vectors $(1,0),(0,1) \in \mathbb{Z}^2$. Denote by $\mathbb{N}\calA$ the affine monoid generated by $\calA$. 
Let $\mu$ and $r$ be two positive integers. Consider the polynomial rings $\KK[\bfX,T_1,\dots,T_{\mu}]\coloneqq \mathbb{K}[x_{1,1},\dots,x_{n,m}, T_1,\ldots, T_{\mu}]$, and $\mathbb{K}[y,t_1,\ldots, t_r]$,
where $\mathbb{K}$ is a field. Define an $\mathbb{N}\calA$-grading on 
them
via 
\[
    \deg(x_{i,j}) = \deg(y) =(1,0) 
\]
for all $i\in [n]$ and $j\in [m]$, and 
\[
    \deg(T_i) = \deg(t_j) =(0,1).
\]
for all $i\in [\mu]$ and $j\in [r]$.
Consider an additional ring 
\[
    \mathbb{K}[\bfZ, \bfT] \coloneqq \KK[z_{1,1},\dots,z_{n,m},T_{1,1},\dots,T_{\mu,r}]
\]
with grading $\deg(z_{i,j}) = (1,0)$, $\deg(T_{i,j}) = (0,1)$, and let 
\[
    \varphi : \mathbb{K}[\bfZ, \bfT]  \rightarrow \mathbb{K}[\bfX, T_1,\ldots, T_{\mu}] \otimes \mathbb{K}[y,t_1,\ldots, t_r] 
\]
be the ring homomorphism defined by 
$\varphi(z_{i,j}) = x_{i,j}\otimes y$ and $\varphi(T_{i,j}) = T_i\otimes t_j$.

\begin{Definition}
    Let $K_1 \subseteq \mathbb{K}[\bfX, T_1,\ldots, T_{\mu}]$ and  $K_2 \subseteq\mathbb{K}[y,t_1,\ldots, t_r]$ be two $\mathbb{N}\calA$-homogeneous ideals, and denote by
    \[
        \pi_{K_1} : \mathbb{K}[\bfX, T_1,\ldots, T_{\mu}] \rightarrow A:=\mathbb{K}[\bfX, T_1,\ldots, T_{\mu}]/K_1
    \]
    and 
    \[
        \pi_{K_2} : \mathbb{K}[y,t_1,\ldots, t_r]  \rightarrow B:=\mathbb{K}[y,t_1,\ldots, t_r] /K_2
    \]
    the canonical projections. Their tensor product as $\mathbb{K}$-algebra homomorphisms is $\pi_{K_1} \otimes \pi_{K_2}$.
    The \emph{toric fiber product} of $K_1$ and $K_2$ is the ideal 
    \[
        K_1 \times_{\calA} K_2 := \ker\left( (\pi_{K_1} \otimes \pi_{K_2}) \circ \varphi \right).
    \]
    The \emph{\textup{(}multigraded\textup{)} Segre product} of $A$ and $B$ is 
    \[
        A\#_{\mathbb{N}{\calA}} B := \bigoplus_{a \in \mathbb{N}\calA} A_a \otimes B_a.
    \]  
\end{Definition}

\begin{Proposition}
    \label{fibertoRees}
    Adopting the notations established above, we have
    \[
        \mathbb{K}[\bfZ,\bfT]/(\calJ^{\ast} \times_{\calA} 0) \cong \calR(\ini_{\tau}(I))\#_{\mathbb{N}\calA} \mathbb{K}[y,t_1,\ldots, t_r] \cong \calR(\oplus _{i=1}^r \ini_{\tau}(I)),
    \]
    where $\calJ^{\ast}$ is the presentation ideal of $\calR(\ini_{\tau}(I))$.
\end{Proposition}
\begin{proof}
    We can set $K_1=\calJ^{\ast}$
    and $K_2=0$. The first isomorphism follows from 
    \cite[Lemma 7]{KRSegre}.
    As for the second isomorphism, let $J\coloneqq \ini_{\tau}(I)$. We have the natural bi-grading on $\calR(J)=\KK[\bfX, \ini_{\tau}(f_i)t \mid i\in [\mu]]$ where $\deg (x_{i,j})=(1,0)$ and $\deg (t)=(-d,1)$. Likewise, we have the bi-grading on  $\mathbb{K}[y,t_1,\ldots, t_r]$ with $\deg (y)=(1,0)$ and $\deg (t_i)=(0,1)$. 
    The degree $(a,b)$ component of $\calR(J)\#_{\mathbb{N}\calA} \mathbb{K}[y,t_1,\ldots, t_r]$ is generated over $\KK$ by elements of the form
    \[
        g_a
        \prod _{i=1}^{\mu}(\ini_{\tau}(f_i) t)^{c_i}\otimes y^a \prod_{j=1}^r t_j^{b_j}
    \]
    where $g_a$ is a monomial in $\KK[\bfX]$ of degree $a$.
    and $\sum_{i=1}^{\mu}c_i=\sum _{j=1}^r b_j=b$. On the other hand, 
    $\calR(\oplus _{i=1}^r J)=\KK[\bfX, \ini_{\tau}(f_i)t_j \mid i\in [\mu], j\in [r]]$
    is bigraded with $\deg (x_{i,j})=(1,0)$ and $\deg (t_j)=(-d,1)$.
    The degree $(a,b)$ component of $\calR(\oplus_{i=1}^r J)$
    is generated over $\KK$ by elements of the form
    \[
        g_a
        \prod _{i=1}^{\mu}\prod_{j=1}^r(\ini_{\tau}(f_i) t_j)^{b_{i,j}}.
    \] 
    where $g_a$ is a monomial in $\KK[\bfX]$ of degree $a$,
    and $\sum_{i=1}^{\mu}\sum_{j=1}^rb_{i,j}=b$. The second claimed isomorphism follows naturally.
\end{proof}

\begin{Definition}
    \label{EQForMulti-ReesInitial}
    Let \( \calA \) be the simple vector configuration above
    and $\calJ^{\ast}\subset \mathbb{K}[\bfX, T_1,\ldots, T_{\mu}]$ be the presentation ideal of $\calR(\ini_{\tau}(I))$.
    \begin{enumerate}[a]
        \item Let \( G \subset \calJ^{\ast} \) be a collection of bi-homogeneous polynomials. To each 
            \[
                g=\sum_{p=1}^{q} \lambda_p g_{p}  \prod _{k=1}^b T_{i_k} \in G 
            \]
            of bi-degree $(a,b)$, where $g_{p}$ is a monomial element in $\KK[\bfX]$ of degree $a$ and $ \prod _{k=1}^b T_{i_k}$ is a monomial in $\KK[T_i \mid i\in [\mu]]$ of degree $b$, we associate with the set
            \[
                \Lift(g) \coloneqq \left\{ \sum_{p=1}^{q} \lambda_p \widetilde{g_{p}}  \prod _{k=1}^b T_{i_k, j}\;\Big|\; j \in [r] \right\}\subset \calJ^{\ast} \times_{\calA} 0,
            \]
            where $\widetilde{g_{p}}=h(g_p)$ for the natural map $h: \KK[\bfX] \rightarrow \KK[\bfZ]$ such that $h(x_{i,j})=z_{i,j}$ for all $i$ and $j$.
            The set $\Lift(G)\coloneqq\bigcup_{g\in G}\Lift(g)$ will be called 
            the \emph{lifting} of \( G \) to \( \calJ^{\ast} \times_{\calA} 0 \).

        \item 
            Let $\calC=\KK[T_{i}t_j \mid i\in [\mu], j\in [r]]$ be a polynomial ring and $\bfC=(T_{i,j})_{i\in [\mu], j\in [r]}$ be a $\mu \times r$ matrix of indeterminates. Denote the set of $2$-minors of $\bfC$ by $\Quad_{\bfC}$. Mapping $T_{i,j}$ to $T_it_j$ yields a presentation of $\calC\cong \KK[T_{i,j}]/I_2(\bfC)$ where $I_2(\bfC)$ is the ideal generated by $\Quad_{\bfC}$.
    \end{enumerate}
\end{Definition}

\begin{Theorem}
    \label{GBMultiRessInitial_FiberProduct}
    Adopt the setting in \Cref{EQForMulti-ReesInitial}, let $G$ be a Gr\"obner basis of the presentation ideal \( \calJ^{\ast}\) of $\calR(\ini_{\tau}(I))$ in $\mathbb{K}[\bfX, T_1,\ldots, T_{\mu}]$ with respect to a weight order $\mathbf{w}_1$.
    Then $\Lift (G)\cup 
    \Quad_{\bfC} $ is a Gr\"obner basis of \( \calJ^{\ast} \times_{\calA} 0 \) in   $\mathbb{K}[
    \bfZ,
    T_{i,j} \mid i\in [\mu], j\in [r]]$ with respect to some weight order $\bfw$ associated with $\bfw_1$.
    In particular, we have
    \[
        \calR(\oplus_{i=1}^r \ini_{\tau}(I))\cong \mathbb{K}[
        \bfZ,
        T_{i,j} \mid i\in [\mu], j\in [r]]/(\Lift(G)\cup 
        \Quad_{\bfC} ).
    \]
    Furthermore, if $\calJ^{*}$ has a squarefree initial, then so does $\calJ^{*}\times_{\calA} 0$.
\end{Theorem}
\begin{proof}
    The Gr\"obner basis statement is by  \cite[Theorem 3.3]{Shibuta} and \cite[Corollary 14]{SullivantToricFiber}. 
    The isomorphism in the ``in particular'' part is established by \Cref{fibertoRees}. The final squarefree statement follows from \cite[Corollary 15]{SullivantToricFiber}.
\end{proof}

\subsection{SAGBI Bases}
\label{sec:sagbi}

Our goal of this work is to utilize the theory of \textsc{Sagbi} deformations for Rees algebras, as developed in \cite{CHV96},
to determine the presentation ideals of the multi-Rees algebras and associated special fiber rings of determinantal ideals. With 
\cite[Lemma 7]{KRSegre},
we can focus on finding the presentation ideal of $\calR(\ini (I))$, the Rees algebra of the initial ideal of $I$; see also \Cref{fibertoRees}. This approach allows for leveraging the Rees algebra of the initial ideal to gain insights into the Rees algebra of the given ideal. 

In this subsection, we will only recall the definition of a \textsc{Sagbi} basis for reference. For more information on \textsc{Sagbi} basis theory, see \cite[Chapter 11]{Sturmfels}. For detailed applications of \textsc{Sagbi} bases to Rees algebras, see \cite{CHV96,BCresPowers}.

\begin{Definition}
    Let $P$ be a polynomial ring over the field $\KK$ endowed with a monomial order $\sigma$. Suppose that $A$ is a finitely generated $\KK$-subalgebra of the ring $P$. 
    The \emph{initial algebra} of $A$ with respect to $\sigma$, denoted by $\ini_\sigma(A)$, is the $\mathbb{K}$-subalgebra of $P$ generated by the initial monomials $\ini_\sigma(a)$ where $a\in A$. 
    Moreover, a set of elements $\calA\subseteq A$ is called a \emph{\textsc{Sagbi} basis} if $\ini_\sigma(A) = \KK[\ini_\sigma(a)\mid a\in \calA]$.
\end{Definition}

Next, we fix a term order on the ring $R=\KK[\bfX]$ introduced in the previous subsection.

\begin{Setting}
    \label{lexOrder}
    \begin{enumerate}[a]
        \item Let the entries in $\bfX$ (i.e., the variables of $R$) be ordered, so that $x_{i,j}>x_{l,k}$ if and only if $i<l$ or $i=l$ and $j<k$.
        \item Let $\tau$ be the lexicographical order on the ring $R=\KK[\bfX]$ with respect to the aforementioned total order $>$ on the variables of $R$.
           
        \item Let $\delta$ be the graded reverse lexicographical order on $\KK[t_1,\ldots,t_r]$ induced by $t_1>t_2>\cdots>t_r$.
        \item We 
            extend  the lexicographic order $\tau$
            to a monomial order $\tau'$ on $R[t_1,\ldots,t_r]$ as follows: for monomials $m_1\cdot \prod_{i=1}^r t_i^{j_i}$ and $m_2\cdot \prod_{i=1}^r t_i^{k_i}$ of $R[t_1,\ldots,t_r]$, set $m_1\cdot\prod_{i=1}^r t_i^{j_i}>_{\tau'} m_2\cdot\prod_{i=1}^r t_i^{k_i}$ if $\prod_{i=1}^r t_i^{j_i}>_{\delta} \prod_{i=1}^r t_i^{k_i}$  or if  $\prod_{i=1}^r t_i^{j_i}= \prod_{i=1}^r t_i^{k_i}$ and $m_1>_{\tau}m_2$ in $R$. 
    \end{enumerate} 
\end{Setting}

It is desirable
to use the \textsc{Sagbi} deformation techniques developed in \cite{CHV96} to study $\mathcal{R}(M)$. 
Since
\[ 
    (\ini_{\tau}(I))^{a_1}\cdots (\ini_{\tau}(I))^{a_r} \subseteq \ini_{\tau} (I^{a_1} \cdots I^{a_r}),
\]
we have 
\begin{equation}
    \mathcal{R}(\ini_{\tau} (I) \oplus \cdots \oplus \ini_{\tau} (I)) \subseteq \ini_{\tau'}(\mathcal{R}( I \oplus \cdots \oplus I)). 
    \label{eqn:rees_initial_direct_sum}
\end{equation}

Therefore, it is natural to gain an understanding of $\mathcal{R}( \bigoplus_{i=1}^r\ini_{\tau} (I))$ on the left-hand side of \eqref{eqn:rees_initial_direct_sum}
first. According to \Cref{GBMultiRessInitial_FiberProduct}, we will focus on $\mathcal{R}(\ini_{\tau} (I)$ from now on.

\subsection{Distributive lattice and Hibi ring}
\label{subsection:Hibi}
Given that our treatment draws upon the Hibi ring theory concerning distributive lattices, it is necessary to elucidate the pertinent terminology.
Let $\sL$ be a finite lattice. By definition, it is a poset with respect to a partial order $\le$. Furthermore, for any two elements $a,b\in \sL$, there is a unique greatest lower bound $a\wedge b$, called the \emph{meet} of $a$ and $b$, and there is a unique least upper bound $a\vee b$, called the \emph{join} of $a$ and $b$. 
A subposet $\sL'$ of $\sL$ is called a \emph{sublattice} if $\sL'$ itself is a lattice, and for any $a,b\in\sL'$, the meet and join of $a$ and $b$ in $\sL'$ coincide with those in $\sL$, respectively. 

Suppose that the join and meet operations of $\sL$ satisfy
\[
    (a\wedge b)\vee (a\wedge c) = a\wedge (b\vee c) \quad \text{and}\quad
    (a\vee b)\wedge (a\vee c) = a\vee (b\wedge c)
\]
for all $a,b,c\in \sL$, then $\sL$ is called a \emph{distributive lattice}.

Given a distributive lattice $\sL$, we can consider the polynomial (known as the \emph{Hibi relation})
\[
    f_{a,b}\coloneqq T_aT_b-T_{a\wedge b}T_{a\vee b}
\]
in the polynomial ring $\KK[T_c\mid c\in \sL]$ over the field $\KK$.
The ideal $I_{\sL}\coloneqq \braket{f_{a,b}\mid a,b\in \sL}$ is called the \emph{join-meet ideal} or \emph{Hibi ideal} associated with $\sL$. 

Let $<$ be a monomial order on $\KK[T_c\mid c\in \sL]$. If for all incomparable $a, b \in \sL$, one has $\ini_<(f_{a,b})= T_aT_b$, the monomial order $<$ is called \emph{compatible} in the language of \cite{MR3838370}. Let $\calG_{\sL}$ be the collection of all $f_{a,b}$ such that $a$ and $b$ are incomparable in $\sL$. Then $\calG_{\sL}$ is a Gr\"obner basis of $I_{\sL}$ by \cite[Theorem 6.17]{MR3838370} with respect to any compatible monomial order.

Furthermore, $I_{\sL}$ is a prime ideal. The quotient ring $\KK[\sL]\coloneqq \KK[T_c\mid c\in \sL]/I_{\sL}$ is called the \emph{Hibi ring} associated with $\sL$. It is known that $\KK[\sL]$ is a toric ring, by \cite[Theorem 6.19]{MR3838370}.

\section{Initial algebra of the multi-fiber ring of generic matrices}

The presentation ideal of the fiber ring for the ideal generated by the maximal minors of a generic matrix is well-known to be generated by the Pl\"ucker relations. This fiber ring serves as the coordinate ring of the Grassmannian variety. Additionally, the maximal minors constitute a \textsc{Sagbi} basis of the fiber ring, as detailed in \cite[Theorem 6.46]{EHgbBook}. Specifically, a Gr\"obner basis of the presentation ideal for the fiber ring of the initial ideal of the maximal minors can be lifted to a Gr\"obner basis of the presentation ideal for the fiber ring of the maximal minors, generated by Pl\"ucker relations. Motivated by this, our goal is to determine a Gr\"obner basis of the presentation ideal for the fiber ring of the direct sum of the initial ideal of the maximal minors.

As in \Cref{sec:prelim}, $\bfX = (x_{i,j})$ is an $n\times m$ generic matrix over the field $\KK$, where $n\leq m$. Meanwhile, $R=\KK[\bfX]$ is the associated polynomial ring, equipped with the term order $\tau$, as delineated in \Cref{lexOrder}.

Let $I_n(\bfX)$ be the ideal in $R$, generated by the maximal minors of $\bfX$. It is well-known that the set of maximal minors is a Gr\"obner basis of $I_n(\bfX)$ with respect to any diagonal monomial order, by \cite[Theorem 6.33]{EHgbBook}. 
To take advantage of this information,
let
\[
    \calD\coloneqq \{\bda=(a_1,\ldots, a_{n}) \in \ZZ_+^{n}\mid a_1 <\cdots < a_n \leq m\},
\]
and we write 
\[
    [\bda] \coloneqq \det[x_{i,a_j} \mid i,j\in [n]]
\]
for the maximal minor of $\bfX$ with columns indexed by the tuple $\bda=(a_1,\dots,a_n) \in \calD$.
Since $\tau$ is a diagonal monomial order, we have
\[
    \ini_{\tau}(I_n(\bfX))=\braket{\bfx_\bda \coloneqq x_{1,a_1}\cdots x_{n,a_n}\mid  \bda\in \calD}.
\]

There is a natural poset structure on $\calD$, given by
\[
    (a_1,\dots,a_n)\le (b_1,\dots,b_n) \qquad \Leftrightarrow \qquad a_i\le b_i\quad\text{for all $i$.}
\]
Under this partial ordering, $\calD$ becomes a lattice. 
In fact, the \emph{join} is given by 
\[
    (a_1,\dots,a_n)\vee (b_1,\dots,b_n) = (\max\{a_1,b_1\},\dots,\max\{a_n,b_n\}),
\]
while the \emph{meet} is given by 
\[
    (a_1,\dots,a_n)\wedge (b_1,\dots,b_n) = (\min\{a_1,b_1\},\dots,\min\{a_n,b_n\}).
\]

It is obvious that $\calD$ becomes a distributive lattice with respect to the above meet and join operations.

By abuse of notation, we will also consider $[r] \coloneqq \{1,2,\dots,r\}$ to be a linearly ordered set with respect to the natural comparison. It is a distributive lattice, where $i\vee j=\max\{i,j\}$ and $i\wedge j=\min\{i,j\}$ for all $i$ and $j$ in $[r]$.
Furthermore, the Cartesian product lattice of $\calD$ and $[r]$, $\calD\times [r]$, is still distributive. 

As mentioned in the previous section, a Hibi ring is a toric ring. However, it is sometimes too naive to expect that endowing a finite set of integral vectors $A$ with a distributive lattice structure will automatically result in an isomorphism between the associated Hibi ring and the semigroup ring $\KK[A]$. Therefore, we must extend \cite[Theorem 6.45]{EHgbBook} to the upcoming \Cref{CanUseRestriction}, since we intend to apply Hibi ring theory to our toric rings.

\begin{Proposition}
    \label{FiberofOneGen}
    The toric ring $\KK[\bfx_{\bda}t_i\mid 
    (\bda,i)\in \calD\times[r]
    ]$ is isomorphic to the Hibi ring $\KK[\calD\times [r]]$.
    In particular, the join-meet ideal $I_{\calD\times [r]}$ is the toric ideal of the toric ring $\KK[\bfx_{\bda}t_i\mid 
    (\bda,i)\in \calD\times[r]
    ]$ in $\KK[T_{\bda,i}\mid 
    (\bda,i)\in\calD\times [r]
    ]$. The set $\calG_{\calD\times[r]}$ of nontrivial Hibi relations 
    is a Gr\"obner basis of this toric ideal with respect to compatible monomial orders.
\end{Proposition}
\begin{proof}
    The ring map 
    \[
        \psi^\ast:\KK[T_{\bda,i}\mid 
        (\bda,i)\in\calD\times[r]
        ]\to \KK[\bfx_{\bda}t_i\mid 
        (\bda,i)\in \calD\times[r]
        ], \quad T_{\bda,i}\mapsto \bfx_{\bda}t_i
    \]
    is clearly surjective. Furthermore, the join-meet ideal $I_{\calD\times[r]}$ is a subideal of $\ker(\psi^\ast)$. Since both the toric ring $\KK[\bfx_{\bda}t_i\mid \bda\in \calD,i\in [r]]$ and the Hibi ring $\KK[\calD\times [r]]$ are integral domains, it suffices to show that the Krull dimensions of these two rings are the same.

    On one hand, since $\KK[\bfx_{\bda}t_i\mid \bda\in \calD,i\in [r]]$ is a finitely generated integral domain over the field $\KK$, we have
    \begin{align*}
        \dim (\KK[\bfx_{\bda}t_i\mid
        (\bda,i)\in \calD\times[r]
        ]) & =\trdeg_{\KK}(\KK[\bfx_\bfc t_i\mid
        (\bfc,i)\in\calD\times [r]
        ])\\
        &=\trdeg_{\KK}(\KK[\bfx_\bfc t_1\mid\bfc\in\calD])+(r-1)\\
        &=\dim(\KK[\bfx_\bfc\mid\bfc\in\calD])+(r-1)\\
        &=n(m-n)+1+(r-1),
    \end{align*}
    where the last equality follows from \cite[Theorem 6.45]{EHgbBook}.

    On the other hand, it follows from \cite[Lemma 6.44]{EHgbBook} that the join irreducible elements of $\calD$ are $\bdc_{ik} \coloneqq (1,2,\dots,i,i+1+k,i+2+k,\dots,n+k)$ with $i = 0,1,\dots,n-1$ and $k = 1,\dots,m-n$. Hence one can easily check that the join irreducible elements of $\calD\times [r]$ are $\bdc_{ik}' \coloneqq (1,2,\dots,i,i+1+k,i+2+k,\dots,n+k,1)$ and $\bda_{n\ell}\coloneqq (1,\ldots,n,\ell)$ with $i = 0,1,\dots,n-1$, $k = 1,\dots,m-n$, and $\ell\in [2,r]$. It follows from \cite[Theorem 6.38]{MR3838370} that $\dim(\KK[\calD\times[r]])= n(m-n)+(r-1)+1$. Therefore, the two integral domains have the same Krull dimension, as desired.
\end{proof}  

From now on, we fix a compatible monomial order $<$ on $\KK[T_{\bda,i}\mid 
(\bda,i)\in\calD\times[r]]$.

\begin{Proposition}
    \label{CanUseRestriction}
    Let $\calD'$ be a nonempty sublattice of $\calD$ of $\calD'$.
    Then the set $\calG_{\calD'\times[r]}$ of nontrivial Hibi relations 
    in the polynomial ring $\KK[T_{\bda,i}\mid 
    (\bda,i)\in \calD'\times[r]
    ]$ is a Gr\"obner basis of the toric ideal of the toric ring $\KK[\bfx_{\bda}t_i\mid
    (\bda,i)\in \calD'\times[r]
    ]$ with respect to the monomial order induced from $<$.
    In particular, the join-meet ideal $I_{\calD'\times [r]}$
    in $\KK[T_{\bda,i}\mid 
    (\bda,i)\in\calD'\times[r]
    ]$ is the toric ideal of the toric ring $\KK[\bfx_{\bda}t_i\mid
    (\bda,i)\in \calD'\times[r]
    ]$, and the toric subring $\KK[\bfx_{\bda}t_i\mid
    (\bda,i)\in \calD'\times[r]
    ]$ is isomorphic to the Hibi ring 
    $\KK[\calD'\times [r]]$.
\end{Proposition}
\begin{proof}
    Let $\calG_{\calD\times[r]}$ be the set of nontrivial Hibi relations associated to the product lattice $\calD\times [r]$. We have seen in \Cref{FiberofOneGen} that $\calG_{\calD\times[r]}$ is a Gr\"obner basis of the toric ideal $I_{\calD\times[r]} \subset \KK[T_{\bda,i}\mid
    (\bda,i)\in\calD\times[r]
    ]$ of the toric ring $\KK[\bfx_\bda t_i\mid 
    (\bda,i)\in\calD\times[r]
    ]$, with respect to the compatible monomial order $<$. 

    Every $f\in \calG_{\calD\times [r]}$ takes the form 
    \[
        f_{(\bda,i),(\bdb,j)} \coloneqq T_{\bda,i}T_{\bdb,j}-T_{(\bda,i)\vee(\bdb,j)}T_{(\bda,i)\wedge(\bdb,j)}
    \]
    for non-comparable $(\bda,i),(\bdb,j)\in \calD\times [r]$. If $\ini(f_{(\bda,i),(\bdb,j)})=T_{\bda,i}T_{\bdb,j}\in \KK[T_{\bdc,k}\mid 
    (\bdc,k) \in \calD'\times[r]
    ]$, it is clear that $\bda,\bdb\in \calD'$. Therefore, $\bda\wedge\bdb,\bda\vee\bdb\in \calD'$ by our assumption. This implies that $f\in \KK[T_{\bdc,k}\mid
    (\bdc,k) \in \calD'\times [r]
    ]$. Therefore, it follows from \cite[Proposition 2.1]{DeNegri} that $\calG_{\calD'\times[r]}$ is a Gr\"obner basis of the toric ideal $I_{\calD'\times[r]} \subset \KK[T_{\bda,i}\mid
    (\bda,i)\in\calD'\times[r]
    ]$ of the toric ring $\KK[\bfx_\bda t_i\mid 
    (\bda,i)\in\calD'\times[r]
    ]$, with respect to the monomial order induced from $<$.
    It remains to notice the trivial fact that $\calG_{\calD'\times [r]}$ generates the join-meet ideal $I_{\calD'\times[r]}$.
\end{proof}

\section{Multi-Rees algebras of the initial of ideal of maximal minors of ladder matrices}

The ensuing next two sections are devoted to the examination of multi-Rees algebra associated with the maximal minors of a ladder matrix, thereby extending the findings presented
in \cite{CDFGLPS}.

In this preliminary section, the focus is on the algebra defined by the initials. 
For the sake of reference, 
$\mathbf{X}$ is an $n \times m$ generic matrix with the assumption $n \leq m$.

\begin{Definition}
    \label{def:Ladder}
    A subset $\bfL$ of $\bfX$ is called a \emph{ladder} if it has the following property: whenever $x_{i,j},x_{i',j'} \in \bfL$ and $i \le i'$, $j \ge j'$, then $x_{u,v}\in \bfL$ for any pair $(u,v)$ with $i \le u \le i'$ and $j' \le v\le  j$. In other words, a ladder has the property that whenever the upper right and the lower left corners of a submatrix of $\bfX$ belong to the ladder, then all the indeterminates in the submatrix belong to the ladder. 
    As a result, there are integers
    \begin{align*}
        1=\ell_1\le \ell_2 \le \cdots \le  \ell_n\le m
        \intertext{and}
        1\le k_1\leq  k_2 \leq \cdots \leq k_n=m,
    \end{align*}
    such that the element $x_{i,j}$ of $\bfX$ belongs to $\bfL$ if and only if $\ell_i\le j\le k_i$.

    Given a ladder $\bfL$, there are two types of minor ideals associated with it.
    \begin{enumerate}
        \item Let $I_t(\bfL)$ be the ideal of $\KK[\bfL]$ generated by all the $t$-minors of $\bfX$ involving only indeterminates of $\bfL$. This ideal is known as a \emph{ladder determinantal ideal} in the literature. The \emph{ladder determinantal ring} $\KK[\bfL]/I_t(\bfL)$ has been well studied; see, for instance, 
            \cite{MR926272,MR1319965,MR991408}. 
            It is known that this ring is a Cohen--Macaulay normal domain. 
        \item Meanwhile, let $\bfX_{\bfL}$ be the matrix obtained from $\bfX$ by setting to zero all the elements not in $\bfL$.
            We will call this matrix the \emph{ladder matrix} associated with $\bfL$.  Let $I_t(\bfX_{\bfL})$ be the ideal of $\KK[\bfL]$ generated by all the $t$-minors of $\bfX_{\bfL}$. In general, the ideal $I_t(\bfX_{\bfL})$ is not prime. 
    \end{enumerate}
\end{Definition}

In this paper, we are only interested in the ideal $L=I_n(\bfX_{\bfL})$, which is generated by the maximal minors of $\bfX_{\bfL}$. We will demonstrate that its multi-Rees algebra exhibits favorable properties. For $\bda=(a_1,\dots,a_n)\in \calD$, we write $[\bda]$ for the maximal minor of $\bfX$ spanned by the columns corresponding to the indices $a_1,\dots,a_n$. Similarly, we will let $[\bda]_{\bfL}$ denote the maximal minor of $\bfX_{\bfL}$.

\begin{Example}
    \label{LadderMatrix}
    The $3\times 9$ matrix
    \[
        \bfX_{\bfL}=\begin{bNiceMatrix}
            x_{1,1}  & x_{1,2} &x_{1,3} & x_{1,4} &x_{1,5} & x_{1,6} &0 &0&0\\
            0 & 0  &x_{2,3} &x_{2,4} &x_{2,5} &x_{2,6} & 0 &0&0\\
            0  &0  &0 &x_{3,4} &x_{3,5} &x_{3,6}&x_{3,7}&x_{3,8}&x_{3,9}
            \CodeAfter
            \tikz \draw [blue] (1-|1) -| (2-|1) -| (3-|3) -| (4-|4) -| (4-|10) -| (3-|10) -| (1-|7) -| cycle;
        \end{bNiceMatrix}
    \]
    is a ladder matrix, associated with the ladder 
    \[
        \bfL=\{x_{1,1} ,  x_{1,2} , x_{1,3} ,  x_{1,4} , x_{1,5} ,  x_{1,6} , x_{2,3} , x_{2,4} , x_{2,5} , x_{2,6} , x_{3,4} , x_{3,5} , x_{3,6}, x_{3,7}, x_{3,8}, x_{3,9}\}. 
    \]
    The ladder determinantal ideal $I_3(\bfL)$ is the principal ideal generated by 
    \[
        [(4,5,6)]=
        \det \begin{pmatrix}
            x_{1,4} & x_{1,5} & x_{1,6} \\
            x_{2,4} & x_{2,5} & x_{2,6} \\
            x_{3,4} & x_{3,5} & x_{3,6} 
        \end{pmatrix}=[(4,5,6)]_{\bfL}.
    \]
    On the other hand, the 
    determinantal ideal $L=I_3(\bfX_{\bfL})$ has many more generators, with the monomial $[(1,3,4)]_{\bfL}=x_{1,1}x_{2,3}x_{3,4}$ being one of them.
\end{Example}

Since some of the maximal minors of $\bfX_{\bfL}$ are zero, we need to keep track of the nonzero ones.
Let $\calL\coloneqq \{\bda\in \calD \mid [\bda]_{\bfL}\ne 0\}$, and call it the \emph{associated column index set}.

\begin{Lemma}
    \label{detNotzero}
    Suppose that $\bda=(a_1,\dots,a_n)\in \calD$. Then $[\bda]_{\bfL}\ne 0$ if and only if the $x_{i,a_i}\in \bfL$ for all $i\in [n]$.
\end{Lemma}
\begin{proof}
    The monomial $x_{1a_1}x_{2a_2}\cdots x_{na_n}$ is a term in the full expansion of the determinant $[\bda]$. If $x_{i,a_i}\in \bfL$ for all $i\in [n]$, this monomial is also a nonzero term of $[\bda]_{\bfL}$. In particular, $[\bda]_{\bfL}\ne 0$.

    On the other hand, if the $(i_0,a_{i_0})$ entry of $\bfX_{\bfL}$ is zero for some $i_0\in[n]$, then either $a_{i_0}<\ell_{i_0}$ or $a_{i_0}>k_{i_0}$. In the first case, the $(i,j)$ entry of $\bfX_{\bfL}$ is $0$ for all $i\ge i_0$ and $j\le a_{i_0}$. Using the Laplace cofactor expansion from linear algebra, it is easy to see that $[\bda]_{\bfL}=0$. The second case is symmetric and similar.
\end{proof}

From this point forward, we will assume that the associated column index set $\calL$ is nonempty. First of all, we will show that $\calL$ inherits the lattice structure of $\calD$.

\begin{Lemma}
    \label{lem:LexClosed}
    Under the componentwise comparison, $\calL$ becomes a sublattice of $\calD$.
\end{Lemma}
\begin{proof}
    In the definition of the ladder $\bfL$, we have the left boundary $\bdl=(\ell_1,\dots,\ell_n)$ and the right boundary $\bdk=(k_1,\dots,k_n)$ in $\ZZ_+^n$.  Notice that $\ZZ_+^n$ is a distributive lattice with respect to the componentwise comparison, and $\calD$ is a sublattice of it. 
    Let $\bdl^\dag\coloneqq \bigwedge\{\bda\in\calD\mid \bdl\le \bda\}$ and $\bdk^\dag\coloneqq\bigvee\{\bda\in\calD\mid \bda\le \bdk\}$.
    It is readily seen that $\bdl^\dag,\bdk^\dag\in \calD$, and $\calL$ is precisely the closed interval $[\bdl^\dag,\bdk^\dag]\coloneqq\{\bda\in\calD\mid \bdl^\dag\le \bda\le \bdk^{\dag}\}$ of the lattice $\calD$. 
\end{proof}

\begin{Remark}
    Suppose that $\bda,\bdb\in \calD$ satisfy $\bda\vee\bdb, \bda\wedge\bdb\in [\bdl^\dag,\bdk^\dag]$ for the $\bdl^\dag$ and $\bdk^\dag$ introduced in the proof of \Cref{lem:LexClosed}. Since $\bda\wedge\bdb\le \bda\le \bda\vee\bdb$, this implies $\bda\in [\bdl^\dag,\bdk^\dag]$. Similarly, we also have $\bdb\in \bdk^\dag$. In other words, $\calL$ is an \emph{embedded sublattice} in the sense of \cite[Definition 3.9]{MR2761126}. As an interesting consequence of this fact, the toric variety associated with $\calL$ is a subvariety of the toric variety associated with $\calD$, by \cite[Proposition 3.10]{MR2761126}. Likewise, $\calL\times[r]$ is an embedded sublattice of $\calD\times[r]$.
\end{Remark}

Similar to the generic matrix $\mathbf{X}$, the canonical generators in the set $\{[\boldsymbol{\alpha}]_{\mathbf{L}} \mid \boldsymbol{\alpha} \in \mathcal{L}\}$ form a Gr\"obner basis.

\begin{Lemma} 
    \label{LadderSet}
    The nonzero maximal minors of a ladder matrix $\bfX_{\bfL}$ form a minimal generating set for $L=I_n(\bfX_{\bfL})$ and 
    serve as a universal Gr\"{o}bner basis for the determinantal ideal $L$.
\end{Lemma}
\begin{proof}
    Notice that the ladder matrix $\bfX_\bfL$ is a sparse matrix. Therefore, it follows from \cite[Theorem 4.1]{Boocher} that $L$ has a linear free resolution arising from a specialization of the Eagon--Northcott complex. In particular, nonzero maximal minors of $\bfX_\bfL$ form a minimal generating set of $L$.
    The remaining universal Gr\"{o}bner basis property follows from \cite[Proposition 5.4]{Boocher}.
\end{proof}

Clearly, $I_n(\bfX)$ and 
$I_n(\bfX_{\bfL})$ resemble each other not only in the context of the Gr\"obner basis, but also in their lattice structures.
Therefore, it makes sense to use the restriction apparatus developed in the previous section. 
Recall that we introduced the lexicographic monomial order $\tau$ in \Cref{lexOrder} on the ring $\KK[\bfX]$. By abuse of notation, we also denote its restriction to the subring $\KK[\bfL]$ by $\tau$. In particular, for the ideal $L=I_n(\bfX_\bfL)$, its initial ideal is
\[
    \ini_\tau(L)=\braket{\bfx_\bda\mid \bda\in \calL}
\]
by \Cref{LadderSet}.
Now, we describe the defining equations of $\calR(\bigoplus_{i=1}^r \ini_{\tau}(L))$ and $\calF(\bigoplus_{i=1}^r \ini_{\tau}(L))$.

\begin{Definition}
    Suppose that $\widetilde{\bdc} = \{c_1, \ldots, c_{n+1}\}$ is a set of integers such that $1\leq c_1<c_2<\cdots<c_{n+1}\le m$. For each $i\in [n+1]$, we denote the tuple $(c_1,\dots,c_{i-1},c_{i+1},\dots,c_{n+1})$ in $\calD$ by $\widetilde{\bdc}\setminus c_i$. If both $\widetilde{\bdc}\setminus c_i$ and $\widetilde{\bdc}\setminus c_{i+1}$ are elements of $\calL$, then by Eagon--Northcott complex, we have an \emph{Eagon--Northcott type} binomial relation (with respect to $\calL$):
    \begin{equation}\label{eqn:ENT}
        x_{i,c_i}T_{\widetilde{\bdc}\setminus c_i}-x_{i,c_{i+1}}T_{\widetilde{\bdc}\setminus c_{i+1}}
    \end{equation}
    in the ambient ring $R[T_{\bda}\mid \bda\in\calL]$
    where $R=\KK[\bfL]$.
    Let $H$ to be the collection of Eagon--Northcott type binomials. 
\end{Definition}

The following is one of the main theorems of this work.

\begin{Theorem}
    \label{FiberLadder}
    The set $\calG_{\calL\times[r]}$ of nontrivial Hibi relations associated with the distributive lattice $\calL\times[r]$
    constitutes a Gr\"{o}bner basis of the presentation ideal of the special fiber ring $\calF\left(\bigoplus_{i=1}^{r} \ini_{\tau} (L)\right)$ 
    with respect to any compatible order $\sigma$ in $\KK[T_{\bda,i} \mid \bda\in \calL\times [r]]$.
    In particular, the $\KK$-algebra $\calF\left(\bigoplus_{i=1}^{r} \ini_{\tau} (L)\right)$ is a Hibi ring.
\end{Theorem}

\begin{proof}
    Since $\calL$ is a distributive sublattice of $\calD$ by \Cref{lem:LexClosed}, \Cref{CanUseRestriction} can be applied directly to the product lattice $\calL\times[r]$. 
\end{proof}

\begin{Remark}\label{GWith2minors}
    Let $\bfC=(T_{\bda,j})_{\bda\in \calL, j\in [r]}$ be an $|\calL| \times r$ matrix of indeterminates, and denote the set of $2$-minors of $\bfC$ by $\Quad_{\bfC}$.  We can compute the toric fiber product as in \Cref{GBMultiRessInitial_FiberProduct}, which shows that $\Lift(\mathcal{G}_{\mathcal{L}}) \cup \Quad_{\mathbf{C}}$ is a Gr\"obner basis of the presentation ideal of the special fiber ring $\mathcal{F}\left(\bigoplus_{i=1}^{r} \ini_{\tau}(L)\right)$ with respect to some term order. It is straightforward to verify that $\mathcal{G}_{\mathcal{L} \times [r]} = \Lift(\mathcal{G}_{\mathcal{L}}) \cup \Quad_{\mathbf{C}}$.
\end{Remark}

\begin{Corollary}
    \label{CMFiber}
    The fiber ring $\calF(\bigoplus_{i=1}^{r} \ini_{\tau} (L))$ is a Koszul Cohen--Macaulay normal domain. Furthermore, $\calF(\bigoplus_{i=1}^{r} \ini_{\tau} (L))$ has rational singularities if the field $\KK$ has characteristic zero and is $F$-rational if $\KK$ has positive characteristic.
\end{Corollary}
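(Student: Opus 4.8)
The plan is to deduce \Cref{CMFiber} from \Cref{FiberLadder} by invoking the general machinery that applies to algebras defined by a quadratic Gröbner basis with squarefree initial terms. First I would observe that \Cref{FiberLadder} identifies $\calG'_{\calL}$ as a Gröbner basis of the presentation ideal $\calK_{\calL}$ of $\calF(\bigoplus_{i=1}^r \ini_\tau(L))$ with respect to $\sigma$, and since each relation in $\calG'_{\calL}$ is a binomial of the form $T_{\bda}T_{\bdb} - T_{\bdc}T_{\bdd}$ with initial term $T_{\bda}T_{\bdb}$, the initial ideal $\ini_\sigma(\calK_{\calL})$ is generated by squarefree quadratic monomials. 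Hence $\calF(\bigoplus_{i=1}^r \ini_\tau(L))$ degenerates flatly to the Stanley--Reisner ring of a simplicial complex, and in fact $\calK_{\calL}$ is a toric (lattice) ideal because the defining map $\psi^\ast$ sends each $T_{\bda}$ to a monomial; the associated affine semigroup ring is therefore the object of study.

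The key steps, in order, are: (1) record that $\calF(\bigoplus_{i=1}^r \ini_\tau(L))$ is an affine semigroup ring (a subalgebra of a polynomial ring generated by monomials), hence a domain; (2) use the squarefree monomial initial ideal together with the fact that a polynomial ring modulo a squarefree monomial ideal coming from a Gröbner degeneration of a semigroup ring is normal — more precisely, invoke the result (e.g.\ \cite[Proposition 13.15]{sturmfels1996grobner} or the standard fact that an affine semigroup ring with a squarefree initial ideal is normal, cf.\ Sturmfels) to conclude normality of the semigroup, hence of $\calF(\bigoplus_{i=1}^r \ini_\tau(L))$; (3) apply the theorem of Hochster that normal affine semigroup rings are Cohen--Macaulay, giving the Cohen--Macaulay property; (4) for the singularity statements, appeal to the characteristic-zero result that normal affine semigroup rings (equivalently, toric rings) have rational singularities (Hochster, or via the toric description), and to the positive-characteristic result that they are $F$-rational — indeed normal affine semigroup rings are $F$-regular, hence $F$-rational (Hochster--Roberts / Smith). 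Alternatively, one can phrase steps (2)--(4) uniformly: the squarefree initial complex is shellable (as the initial ideal is generated in degree two and the algebra is Koszul, or directly because the Gröbner basis degeneration yields an ASL/Hodge algebra structure), and then a standard deformation argument transfers Cohen--Macaulayness and $F$-rationality from the special fiber to $\calF(\bigoplus_{i=1}^r \ini_\tau(L))$.

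The main obstacle I anticipate is not any single hard proof but rather citing the cleanest route: one must be careful that the relevant properties (rational singularities, $F$-rationality) genuinely transfer under Gröbner/\textsc{Sagbi} degeneration. The safest approach is to note that $\ini_\sigma(\calK_{\calL})$ is the Stanley--Reisner ideal of a complex $\Delta$ which one shows is Cohen--Macaulay (shellable), so that $\calF(\bigoplus_{i=1}^r \ini_\tau(L))$ is Cohen--Macaulay by semicontinuity of depth under flat degeneration, and is normal by the squarefree criterion; then rational singularities in characteristic zero and $F$-rationality in positive characteristic follow from the general theory of normal semigroup rings (Hochster's theorem and its characteristic-$p$ refinement), since the algebra itself — not merely its degeneration — is a normal affine semigroup ring. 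I would therefore state the proof compactly as: $\calF(\bigoplus_{i=1}^r \ini_\tau(L))$ is a normal affine semigroup ring by \Cref{FiberLadder} and the squarefree quadratic initial ideal, whence Cohen--Macaulay by Hochster, with rational singularities in characteristic zero and $F$-rationality in positive characteristic by the corresponding results for normal toric rings.

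\begin{proof}
    By \Cref{FiberLadder}, the presentation ideal $\calK_{\calL}$ of $\calF(\bigoplus_{i=1}^r \ini_\tau(L))$ has the quadratic Gr\"obner basis $\calG'_{\calL}$ with respect to $\sigma$, and each element $T_{\bda}T_{\bdb}-T_{\bdc}T_{\bdd}$ of $\calG'_{\calL}$ has squarefree initial monomial $T_{\bda}T_{\bdb}$. Hence $\ini_\sigma(\calK_{\calL})$ is a squarefree monomial ideal, and $\calF(\bigoplus_{i=1}^r \ini_\tau(L))$ flatly degenerates to the Stanley--Reisner ring $T/\ini_\sigma(\calK_{\calL})$. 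On the other hand, $\calF(\bigoplus_{i=1}^r \ini_\tau(L))$ is an affine semigroup ring, being the $\KK$-subalgebra of $\KK[\bfX][t_1,\dots,t_r]$ generated by the monomials $\ini_\tau(|\bda|_{\bfL})\,t_i$ with $\bda\in\calL$, $i\in[r]$; in particular it is a domain. Since an affine semigroup ring possessing an initial ideal generated by squarefree monomials is normal (see \cite[Chapter 13]{sturmfels1996grobner}), the ring $\calF(\bigoplus_{i=1}^r \ini_\tau(L))$ is a normal affine semigroup ring. By Hochster's theorem, a normal affine semigroup ring is Cohen--Macaulay, so $\calF(\bigoplus_{i=1}^r \ini_\tau(L))$ is a Cohen--Macaulay normal domain. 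Finally, normal affine semigroup rings have rational singularities when $\KK$ has characteristic zero, and are $F$-rational (indeed strongly $F$-regular) when $\KK$ has positive characteristic; applying these to $\calF(\bigoplus_{i=1}^r \ini_\tau(L))$ gives the last assertion.
\end{proof}
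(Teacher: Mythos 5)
Your proposal is correct and follows essentially the same route as the paper: squarefree initial ideal from \Cref{FiberLadder} (the paper additionally cites \Cref{StandardProp} to guarantee $\bda\neq\bdb$ so the leading terms are genuinely squarefree), normality via \cite[Proposition 13.15]{Sturmfels}, Cohen--Macaulayness via Hochster's theorem on normal affine semigroup rings, and the singularity statements via the direct-summand/Boutot and Hochster--Huneke results for normal toric rings.
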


\begin{proof}
    Since $\calF(\bigoplus_{i=1}^{r} \ini_{\tau} (L))$ is a Hibi ring, the first statement follows from \cite[Theorem  6.7, Corollary 6.21]{EHgbBook}.

    The argument from \cite[Corollary 2.3]{CHV96} supports the discussion on singularities. For completeness, we include it here. According to Hochster \cite[Proposition 1]{Hochster}, since $\calF(\bigoplus_{i=1}^{r} \ini_{\tau} (L))$ is normal, it is a direct summand of a polynomial ring. Consequently, by Boutot's theorem \cite{Boutot}, $\calF(\bigoplus_{i=1}^{r} \ini_{\tau} (L))$ possesses rational singularities if the characteristic of $\KK$ is zero. Furthermore, according to a theorem by Hochster and Huneke \cite{MR1044348}, $\calF(\bigoplus_{i=1}^{r} \ini_{\tau} (L))$ is strongly $F$-regular and particularly $F$-rational if $\KK$ has positive characteristic.
\end{proof}

The next  result shows that the initial ideal of $L$ has the weakly polymatroidal property.
This paves the way for determining the defining equations of the Rees algebra $\calR(\bigoplus_{i=1}^{r} \ini_{\tau} (L))$.
We first recall the definition of weakly polymatroidal ideal.

\begin{Definition}
    [{\cite[Definition 6.25]{EHgbBook}}]
    An equigenerated squarefree monomial ideal $J$ of the polynomial ring $
    \KK[x_1,\dots,x_n]$ is called \emph{weakly polymatroidal} with respect to the order $x_1>x_2>\cdots>x_n$ of variables, if for any two minimal monomial generators $f=x_{i_1}\cdots x_{i_d}$ with $i_1<\cdots<i_d$, and $g=x_{j_1}\cdots x_{j_d}$ with $j_1<\cdots<j_d$, such that $i_1=j_1, \dots,i_{t-1}=j_{t-1}$ and $i_t<j_t$, there exists $\ell\ge t$ such that $x_{i_t}(g/x_{j_{\ell}})\in J$.
\end{Definition}

\begin{Lemma}\label{weeklyPoly}
    The initial ideal of 
    $L=I_n(\bfX_{\bfL})$ is weakly polymatroidal with respect to the natural order of the variables in $\bfL$.
\end{Lemma}
\begin{proof}
    The ideal $\ini_{\tau}(L)$ is obviously squarefree and equigenerated. Suppose that $\bda,\bdb\in \calL$ satisfies $a_i=b_i$ for $i=1,2,\dots,t-1$ and $a_t<b_t$.  
    Let 
    \[
        \bdc\coloneqq \bdb+a_t\bde_t-b_t\bde_t=(b_1,\dots,b_{t-1},a_t,b_{t+1},\dots,b_n),
    \]
    where $\bde_t$ is the $t$-th unit vector in $\RR^n$.
    It is apparent that $\bdc\in\bdD$ and $\bdl^\dag\le \bdc\le \bdk^\dag$ for the $\bdl^\dag$ and $\bdk^\dag$ introduced in the proof of \Cref{lem:LexClosed}. Therefore $\bdc\in \calL$. This amounts to saying that $\ini_{\tau}(L)$ is weakly polymatroidal.
\end{proof}

\begin{Remark}
    \label{rmk:compatible_sorting}
    Let $\KK[T_{\bda}\mid \bda\in \calL]$ be the ambient ring for the presentation of the toric ring $\KK[\bfx_{\bda}\mid\bda\in\calL]$. 
    For each $\bda,\bdb\in \calL$, it is easy to see that $\sort(\bfx_{\bda},\bfx_{\bdb})=(\bfx_{\bda\wedge \bdb},\bfx_{\bda\vee\bdb})$ for the sorting operator due to Sturmfels. Therefore, the following orders coincide on $\KK[T_{\bda}\mid \bda\in\calL]$:
    \begin{itemize}
        \item compatible monomial orders as defined in Subsection \ref{subsection:Hibi};
        \item sorting orders in the sense of \cite[Theorem 6.14]{EHgbBook}.
    \end{itemize}
\end{Remark}

\begin{Theorem}
    \label{thm:InitialRees}
    Let $\sigma$ be any compatible order on $\KK[T_{\bda}\mid\bda\in \calL]$ and $\sigma'$ be an extended monomial order on $R[T_{\bda} \mid \bda \in \calL]$ where $R = \KK[\bfL]$.
    Let $\calG_{\calL}' \coloneqq \calG_{\calL} \cup H$ be the set obtained from $\calG_{\calL}$ by adding all the Eagon--Northcott type binomial relations. 
    Then, $\calG_{\calL}'$ is a Gr\"{o}bner basis of the presentation ideal of $\calR(\ini_{\tau} (L))$ with respect to
    $\sigma'$. 
\end{Theorem}

\begin{proof}
    According to \Cref{weeklyPoly} and \Cref{rmk:compatible_sorting}, as well as \cite[Proposition 6.26]{EHgbBook}, the ideal $\ini_{\tau} (L)$ satisfies the $\ell$-exchange property in the sense of \cite[Definition 6.23]{EHgbBook}.  
    Therefore, the Gr\"{o}bner basis result is an immediate consequence of \cite[Theorem 6.24]{EHgbBook}, and \Cref{FiberLadder}.
\end{proof}

Finally, we handle the multi-Rees algebra of the initial ideal.
Following 
\Cref{FiberLadder} and \Cref{thm:InitialRees}, let
\[
    \calG_{\calL \times [r]}' \coloneqq \Lift(\calG_{\calL}')  \subset  \KK[\bfZ,T_{\bda,j} \mid \bda\in \calL, j\in [r]]
\]
be the lifting of the Gr\"obner basis $\calG_{\calL}'$ defined in of the presentation ideal of $\calR(\ini_{\tau}(L))$ in $\KK[\bfL, T_{\bda} \mid \bda \in \calL]$.

\begin{Theorem}
    \label{thm:InitialMultiRees}
    With the above settings, the set 
    \(\calG_{\calL \times [r]}'\cup \Quad_{\bfC}\)
    is a Gr\"{o}bner basis of the presentation ideal of $\calR(\bigoplus_{i=1}^r\ini_{\tau} (L))$.
    In particular, $\calR( \bigoplus_{i=1}^r\ini_{\tau} (L))$ is a Koszul Cohen--Macaulay normal domain. Furthermore, $\calR( \bigoplus_{i=1}^r\ini_{\tau} (L))$ has rational singularities if the field $\KK$ has characteristic zero, and is $F$-rational if $\KK$ has positive characteristic.
\end{Theorem}

\begin{proof}
    The Gr\"{o}bner basis statement is the immediately consequence of Theorems \ref{GBMultiRessInitial_FiberProduct}, \ref{FiberLadder}, and \ref{thm:InitialRees}.  Since the presentation ideal of the toric ring $\calR(\bigoplus_{i=1}^{r} \ini_{\tau} (L))$ has a squarefree initial ideal, the ring $\calR(\bigoplus_{i=1}^{r} \ini_{\tau} (L))$ is normal by \cite[Proposition 13.15]{Sturmfels}. It is evident that $\calR(\bigoplus_{i=1}^{r} \ini_{\tau} (L))$ is isomorphic to the affine semigroup ring $ \KK\big[\bfL
    \cup \left\{\ini_{\tau}([\bda]_{\bfL})t_i\mid (\bda,i) \in \calL\times [r]\right\}\big]$. Consequently, as per \cite[Theorem 1]{Hochster}, $\calR(\bigoplus_{i=1}^{r} \ini_{\tau} (L))$ is Cohen--Macaulay.
    The remaining statements can be proved by a similar argument as that in the proof of \Cref{CMFiber}.
\end{proof}

\section{Multi-Rees algebras of maximal minors of ladder matrices}

The objective of this section is to compute a Gr\"obner basis for the presentation ideal of the multi-Rees algebra associated with the 
ideal $L=I_n(\bfX_{\bfL})$, which is generated by the maximal minors of the ladder matrix $\bfX_{\bfL}$. 
To achieve this, we will leverage the previously determined Gr\"obner basis for the presentation ideal of the multi-Rees algebra of the initial ideal, as described in Theorem \ref{thm:InitialRees}. 

In \Cref{LadderSet}, we identified a specific class of maximal minors that serve as minimal generators for \( L \). These minors will play a pivotal role in the subsequent Pl\"ucker relations, as detailed in \Cref{reesThirdEq} and \Cref{thm:sagbiLifts}.

\begin{Definition}
    \label{PluckersExist}
    Suppose that $(\bda,i_1)$ and $(\bdb,i_2)$ are two incomparable elements in the distributive product lattice $\calD\times [r]$. 
    \begin{enumerate}[a]
        \item \label{PluckersExist_a}
            Let $(\bdc,j_1) \coloneqq (\bda,i_1)\wedge(\bdb,i_2)$ and $(\bdd,j_2)\coloneqq (\bda,i_1)\vee(\bdb,i_2)$.

        \item \label{PluckersExist_b}
            For the indices $j_1$ and $j_2$ defined in part \ref{PluckersExist_a}, two tuples $$(\bde,j_1)=(e_1,e_2,\ldots,e_{n},j_1) \text{ and }(\bdg,j_2)=(g_1,g_2,\ldots,g_{n},j_2)$$ in $\calD \times [r]$ will be called \emph{vibrations of $(\bda,i_1)$ and $(\bdb,i_2)$}, 
            if $\bde\ne \bdc$, $\bdg \ne \bdd$, $\bde\le \bdg\wedge \bdc$, and we have an equality of multisets:
            \[
                \{e_1,e_2,\ldots,e_{n},g_1,g_2,\ldots,g_{n}\}=\{a_1,a_2,\ldots,a_{n},b_1,b_2,\ldots,b_{n}\}.
            \]
    \end{enumerate} 
\end{Definition}

A brief observation is necessary at this point.

\begin{Remark}
    Let $\calL$ denote the column index set associated with $\bfL$.
    Suppose that $(\bda,i_1)$ and $(\bdb,i_2)$ are two tuples in $\calL \times [r]$. Therefore, we have both $[\bda]_{\bfL}t_{i_1}$ and $[\bdb]_{\bfL}t_{i_2}$ in the Rees algebra $\calR(\bigoplus_{i=1}^{r}L)$. 
    If $(\bda,i_1)$ and $(\bdb,i_2)$ are incomparable, set $(\bda,i_1)\wedge (\bdb,i_2)=(\bdc,j_1)$ and $(\bda,i_1)\vee (\bdb,i_2)=(\bdd,j_2)$.
    It is an important fact,
        which we will demonstrate in the proof of \Cref{thm:sagbiLifts},
    that the product $[\boldsymbol{a}]_{\mathbf{L}} t_{i_1} \cdot [\boldsymbol{b}]_{\mathbf{L}} t_{i_2}$ in the Rees algebra $\mathcal{R}\left(\bigoplus_{i=1}^{r} L\right)$ can be expressed as a linear combination of products of the form $[\boldsymbol{c}]_{\mathbf{L}} t_{j_1} \cdot [\boldsymbol{d}]_{\mathbf{L}} t_{j_2}$, along with additional products of the form $[\boldsymbol{e}]_{\mathbf{L}} t_{j_1} \cdot [\boldsymbol{g}]_{\mathbf{L}} t_{j_2}$, where each pair $(\boldsymbol{e}, j_1)$ and $(\boldsymbol{g}, j_2)$ satisfies the assumptions outlined in \Cref{PluckersExist}.
    This property is well-established in the case where $r=1$ and $\calL=\calD$ under generic conditions.
    For a detailed exposition, we refer to the proof of Theorem 6.46 in \cite{EHgbBook}.
\end{Remark}

To determine the presentation ideal of $\mathcal{R}(\bigoplus_{i=1}^{r}L)$, we first describe its expected defining equations.

\begin{Definition}
    \label{reesThirdEq}
    Let $R[T_{\bda,i}\mid (\bda,i)\in \calL\times [r] ]$ be the ambient ring for the presentation ideal of $\calR(\bigoplus_{i=1}^r L)$, where $R=\KK[\bfL]$. 
    By abuse of notation, for $\bda\in \calD\setminus \calL$, we set $T_{\bda,i}=0$ in this ring. Similarly, for $x_{i,j}\in \KK[\bfX]\setminus R$, we set $x_{i,j}=0$ in this ambient ring.
    \begin{enumerate}[a]
        \item Let $\widetilde{\bdc} = \{c_1, \ldots, c_{n+1}\}$ be a set of integers such that $1\leq c_1<c_2<\cdots<c_{n+1}\le m$. For each $i\in [n]$ and $k\in [r]$, the polynomial of the form
            \begin{equation}
                \label{eqn:LinSyzLadder}
                \sum_{j\in[n+1]} (-1)^{i+j} x_{i,c_j} T_{\widetilde{\bdc}\setminus c_j,k},
            \end{equation}
            where $\widetilde{\bdc}\setminus c_j$ denotes the tuple $(c_1,\dots,c_{j-1},c_{j+1},\dots,c_{n+1})$ in $\calD$, will be called an \emph{Eagon--Northcott type relation}. This terminology arises from its connection to the Eagon--Northcott complex.

        \item \label{Plucker}Suppose that $(\bda,i_1)$ and $(\bdb,i_2)$ are two incomparable elements in $\calL\times [r]$.
            Let $(\bdc,j_1)=(\bda,i_1)\wedge (\bdb,i_2)$ and $(\bdd,j_2)=(\bda,i_1) \vee (\bdb,i_2)$, and let $(\bde,j_1)$ and $(\bdg,j_2)$ be vibrations of $(\bda,i_1)$ and  $(\bdb,i_2)$ as defined in \Cref{PluckersExist}. 
            Therefore, the sequence 
            \[
                (e_1,e_2,\ldots,e_n,g_1,g_2,\ldots,g_n) 
            \]
            is derived from the sequence 
            \[
                (a_1,a_2,\ldots,a_n,b_1,b_2,\ldots,b_n) 
            \]
            via a permutation $\iota_{\bde,\bdg}\in \frakS_{2n}$. In accordance with this notation, polynomials of the form
            \begin{equation}\label{eqn:PluckerLadder}
                T_{\bda,i_1}T_{\bdb,i_2}-T_{\bdc,j_1}T_{\bdd,j_2}+ \sum_{\substack{(\bde, j_1), (\bdg, j_2) \\ \text{vibrations of } (\bda, i_1), (\bdb, i_2)}} \sgn(\iota_{(\bde,j_1),(\bdg,j_2)}) T_{\bde,j_1}T_{\bdg,j_2}
            \end{equation}
            will be referred to as \emph{Pl\"{u}ker type relations}.
    \end{enumerate}
\end{Definition}

\begin{Remark}
    By \Cref{GWith2minors}, we have $\calG_{\calL\times [r]}=\Lift(\calG_{\calL})\cup \Quad_{\bfC}$. The set $\Quad_{\bfC}=\{ T_{\bda,i_1}T_{\bdb,i_2}-T_{\bda,i_2}T_{\bdb,i_1} \mid \bda< \bdb \in \calL \text{ and }i_1>i_2 \}$ arises from the incomparable pair $(\bda,i_1)$ and $(\bdb,i_2)$ in $\calL \times [r]$, where no vibrations exist. Consequently, \Cref{reesThirdEq} \ref{Plucker} inherently includes elements of $\Quad_{\bfC}$.
\end{Remark}

We are now prepared to state and prove the main theorem of this work.

\begin{Theorem}
    \label{thm:sagbiLifts}  
    Let $L=I_n(\bfX_{\bfL})$ be the ideal generated by the maximal minors of the ladder matrix $\bfX_{\bfL}$ over the polynomial ring $R=\KK[\bfL]$, and let $\calL$ denote the associated column index set. For any positive integer $r$, we have:

    \begin{enumerate}[a]
        \item \label{thm:sagbiLifts_a}  
            The polynomials of the set $\{[\bda]_{\bfL}t_i \mid (\bda,i) \in \calL\times [r]\}$ for a \textsc{Sagbi} basis of the $\mathbb{K}$-algebra they generate with respect to the monomial order $\tau'$ in \Cref{lexOrder}. In particular, 
            \[
                \ini_{\tau'}(\KK[[\bda]_{\bfL}t_i \mid (\bda,i) \in \calL\times [r]])= \KK[\ini_{\tau}([\bda]_{\bfL})t_i \mid (\bda,i) \in \calL\times [r]].
            \]
        \item \label{thm:sagbiLifts_b}  
            The polynomials in the set $ \bfL \cup \{[\bda]_{\bfL}t_i\mid (\bda,i) \in \calL\times [r]\}$ form a \textsc{Sagbi} basis of the Rees algebra $\mathcal{R}(\bigoplus_{i=1}^{r}L)$ with respect to the monomial order $\tau'$ as defined in \Cref{lexOrder}. 
            In particular, 
            \[
                \ini_{\tau'}\left(\calR \left(\bigoplus\nolimits_{i=1}^{r}L\right)\right)= \KK\big[
                    \bfL
                \cup \left\{\ini_{\tau}([\bda]_{\bfL})t_i\mid (\bda,i) \in \calL\times [r]\right\}\big]=\calR \left(\bigoplus\nolimits_{i=1}^{r} \ini_{\tau} (L)\right).
            \]

        \item \label{thm:sagbiLifts_c}  
            The presentation ideal of $\mathcal{F}\left(\bigoplus_{i=1}^{r} L\right)$ has a Gr\"obner basis consisting entirely of Pl\"ucker relations of the form \eqref{eqn:PluckerLadder}, with respect to a particular induced monomial order $\omega$ on $\KK[\mathbf{T}]\coloneqq\KK[T_{\bda,i}\mid (\bda,i)\in \calL\times [r] ]$.
        \item \label{thm:sagbiLifts_d}  
            With respect to some monomial ordering $\omega'$ on the ring $R[\bfT]\coloneqq R[T_{\bda,i}\mid (\bda,i)\in \calL\times [r]]$, the presentation ideal of $\mathcal{R}(\bigoplus_{i=1}^{r}L)$ possesses a Gr\"obner basis comprising Eagon--Northcott type relations in the form of \eqref{eqn:LinSyzLadder} and Pl\"ucker relations in the form of \eqref{eqn:PluckerLadder}.  In particular, $\calR(\bigoplus_{i=1}^{r}L)$ is of fiber type.
    \end{enumerate} 
\end{Theorem}
\begin{proof}
    In the following, we will focus on proving the \textsc{Sagbi} basis parts \ref{thm:sagbiLifts_a} and \ref{thm:sagbiLifts_b}. The proof of these statements is essentially a tailored version of \cite[Theorem 6.46]{EHgbBook} adapted to the multi-grading framework. The remaining statements \ref{thm:sagbiLifts_c} and \ref{thm:sagbiLifts_d} will follow from our proof for establishing the \textsc{Sagbi} basis, combined with \cite[Corollaries 2.1 and 2.2]{CHV96}.

    Recall that we introduced the map $\phi$ and $\phi^*$ in 
        Subsection \ref{subsection:blowup_algebras}.
    As a consequence of \Cref{thm:InitialRees}, the Eagon--Northcott type binomial relations and the nontrivial Hibi relations, as defined in \Cref{thm:InitialRees}, collectively form a Gr\"obner basis of $\ker(\phi^{\ast})$. Therefore, it suffices to show that any such binomial of the form $h=h_1-h_2$ can be ``lifted''. In other words, we only need to express $\phi(h)$ as a linear combination of elements of the form $\lambda \bdu \prod [\bda]_{\bfL}t_i$, where $\lambda \in \mathbb{K}\setminus \{0\}$, $\bdu$ is a monomial in the $x_{ij}$, $\bda \in \calL$, and $\ini_{\tau'}(\phi (h_1))=\ini_{\tau'}(\phi (h_2)) > \ini_{\tau'}(\bdu\prod [\bda]_{\bfL}t_i)$. For details, see \cite[Theorem 6.43]{EHgbBook}.
    The remaining verification follows from standard arguments.
    \begin{enumerate}[a]
        \item[\ref{thm:sagbiLifts_a}] The Eagon--Northcott type binomial relation in \eqref{eqn:ENT} can be lifted to the relation (\ref{eqn:LinSyzLadder}), as demonstrated by the following observations:
            \begin{align}
                (-1)^{i+i}x_{i, c_i}([\widetilde{\bdc}\setminus c_i]_{\bfL}t_k) & +(-1)^{i+i+1} x_{i, c_{i+1}}([\widetilde{\bdc}\setminus c_{i+1}]_{\bfL} t_k) \notag \\
                &\qquad = \sum_{j\in[n+1]\setminus \{i,i+1\}} (-1)^{i+j+1} x_{i,c_j} ([\widetilde{\bdc} \setminus c_j]_{\bfL} t_k), 
                \label{eqn:ENT_lift}
            \end{align}
            where $\widetilde{\bdc} = \{c_1, \ldots, c_{n+1}\}$ and $1\leq c_1<\cdots <c_{n+1}\leq m$. The equality holds because, from linear algebra, we have 
            \[
                \sum_{j\in[n+1]} (-1)^{i+j} x_{i,c_j} [\widetilde{\bdc} \setminus c_j]_{\bfL}=0.
            \]
            To show that \eqref{eqn:ENT_lift} gives the desired lifting,
            we analyze two cases bases on the position of $j$ relative to $i$:
            \begin{enumerate}[i]
                \item If $j<i$, the leading monomial of $x_{i,c_j} [\widetilde{\bdc}\setminus c_j]_{\bfL}$ is 
                    no bigger than
                    \[
                        x_{i,c_j}x_{1,c_1}\cdots x_{j-1, c_{j-1}}x_{j, c_{j+1}}\cdots x_{i ,c_{i+1}}\cdots x_{n, c_{n+1}}.
                    \]
                    On the other hand, the common leading monomial of $x_{i, c_i}[\widetilde{\bdc}\setminus c_i]_{\bfL}$ and $x_{i, c_{i+1}}[\widetilde{\bdc}\setminus c_{i+1}]_{\bfL}$ on the left of \eqref{eqn:ENT_lift} is 
                    \[
                        x_{i,c_i}x_{1,c_1}\cdots x_{j-1, c_{j-1}}x_{j, c_{j}}\cdots x_{i-1, c_{i-1}} x_{i, c_{i+1}}\cdots x_{n, c_{n+1}}. 
                    \]
                    Since $x_{j ,c_{j+1}} < x_{j, c_j}$, the initial monomials meet the desired condition.

                \item If instead $j>i+1$, then the leading monomial of  $x_{i,c_j} [\widetilde{\bdc}\setminus c_j]_{\bfL}$ is 
                    no bigger than
                    \[
                        x_{i,c_j}x_{1,c_1}\cdots x_{i-1,c_{i-1}}x_{i,c_{i}} x_{i+1,c_{i+1}}\cdots x_{j-1 c_{j-1}}x_{j c_{j+1}}\cdots x_{m c_{m+1}}.
                    \]
                    The common leading monomial of $x_{i, c_i}[\widetilde{\bdc}\setminus c_i]_{\bfL}$ and $x_{i, c_{i+1}}[\widetilde{\bdc}\setminus c_{i+1}]_{\bfL}$ on the left-hand side is
                    \[
                        x_{i,c_i}x_{1,c_1}\cdots x_{i-1,c_{i-1}} x_{i,c_{i+1}}\cdots x_{j-1,c_{j}}x_{j,c_{j+1}}\cdots x_{m,c_{m+1}}. 
                    \]
                    Since $x_{i,c_j} < x_{i,c_{i+1}}$, the initial monomials meet the desired condition.
            \end{enumerate} 
        \item[\ref{thm:sagbiLifts_b}] Suppose that $(\bda,i_1)$ and $(\bdb,i_2)$ are two incomparable elements in $\calL\times [r]$. Let $(\bdc,j_1)=(\bda,i_1) \wedge (\bdb,i_2)$ and $(\bdd,j_2)=(\bda,i_1)\vee (\bdb,i_2)$. We therefore analyze the lifting of the nontrivial Hibi relation $T_{\bda,i_1}T_{\bdb,i_2}-T_{\bdc,j_1}T_{\bdd,j_2}$ to the Pl\"ucker type relation in (\ref{eqn:PluckerLadder}). The analysis depends on the relationship between the elements $(\bda,i_1)$ and $(\bdb,i_2)$.
            \begin{enumerate}[i]
                \item When $\bda<\bdb$ but $i_1>i_2$, we have 
                    $(\bdc, j_1) = (\bda, i_2)$ and $(\bdd, j_2) = (\bdb, i_1)$. Hence
                    \[
                        [\bda]_{\bfL}\, t_{i_1} \cdot [\bdb]_{\bfL}\, t_{i_2} - [\bdc]_{\bfL}\, t_{j_1} \cdot [\bdd]_{\bfL}\, t_{j_2}
                        = 0.
                    \]
                    No further action is needed in this case.

                \item When $\bdc \notin \{\bda, \bdb\}$ and $\bdd \notin \{\bda, \bdb\}$, we claim that
                    \begin{equation}
                        [\bda]_{\bfL} t_{i_1} \cdot [\bdb]_{\bfL}t_{i_2} - [\bdc]_{\bfL}t_{j_1} \cdot [\bdd]_{\bfL} t_{j_2}  = -\sum \sgn(\iota_{(\bde,j_1),(\bdg,j_2)}) [\bde]_{\bfL}t_{j_1}\cdot [\bdg]_{\bfL}t_{j_2}. 
                        \label{eqn:Plucker_lift}
                    \end{equation}
                    In this expression, the sum is taken over vibrations $(\bde,j_1)$ and $(\bdg,j_2)$ of $(\bda,i_1)$ and $(\bdb,i_2)$ as defined in \Cref{PluckersExist}.  Whence, the sequence $(e_1,e_2,\ldots,e_n,g_1,g_2,\ldots,g_n)$ arises from the sequence $(a_1,a_2,\ldots,,a_n,b_1,b_2,\ldots,b_n)$ by applying a permutation $\iota_{\bde,\bdg}\in \frakS_{2n}$. 

                    To substantiate the assertion, for the $(\bda,i_1)=(a_1,\dots,a_n,i_1)\in \calD\times [r]$, let $[\bda]$ be the maximal minor of $\bfX$ spanned by the columns corresponding to the subtuple $\bda=(a_1,\dots,a_n)$. Similarly, the elements $[\bdb],[\bdc]$, and so on are defined.
                    The equality in \eqref{eqn:Plucker_lift} is guaranteed by the simplified form:
                    \[
                        [\bda][\bdb]-[\bdc][\bdd]=-\sum \sgn(\iota_{\bde,\bdg})[\bde][\bdg]
                    \]
                    from \cite[Lemma 7.2.3]{brunsherzog}, under the obvious specialization from $\bfX$ to $\bfX_{\bfL}$. Moreover, it is well-established that $\ini_{\tau} ([\bde][\bdg]) < \ini_{\tau}([\bda][\bdb])=\ini_{\tau}([\bdc][\bdd])$. This can be seen, for instance, 
                    from the proof of \cite[Theorem 6.46]{EHgbBook}. 
                    Consequently, the initial monomials satisfy the requisite condition.
                    \qedhere
            \end{enumerate} 
    \end{enumerate} 
\end{proof}

\begin{Example}
    Let us consider the ideal $L=I_3(\bfX_{\bfL})$, where $\bfX_{\bfL}$ represents the ladder matrix in \Cref{LadderMatrix}. By lifting the defining equations of  $\calR(\bigoplus_{i=1}^{3} \ini _{\tau}(L))$, we obtain the defining equations of $\mathcal{R}(\bigoplus_{i=1}^{3} L)$.  As illustrated in the proof of \Cref{thm:sagbiLifts}, there are two distinct types of relations in the presentation ideal of $\mathcal{R}(\bigoplus_{i=1}^{3} L)$.
    \begin{enumerate}
        \item The first type of relations are the Eagon--Northcott relations. Examples of these include 
            \begin{gather*} 
                x_{1,1}T_{234,1}-x_{1,2}T_{134,1},  
                \intertext{and}
                x_{2,4}T_{356,3}-x_{2,5}T_{346,3}-x_{2,3}T_{456,3}+x_{2,6}T_{345,3}.
            \end{gather*}
        \item The second type of relations are the Pl\"ucker relations, exemplified by
            \begin{gather*} 
                T_{134,3}T_{245,2}-T_{134,2}T_{245,3},\\
                T_{145,1}T_{236,3}-T_{135,1}T_{246,3} +T_{134,1}T_{256,3},
                \intertext{and}   
                T_{156,2}T_{348,3}-T_{146,2}T_{358,3} +T_{145,2}T_{368,3}+T_{136,2}T_{458,3}-T_{135,2}T_{468,3}+T_{134,2}T_{568,3}.
            \end{gather*}
    \end{enumerate}
    Notice that the maximal minors encompassing the initial two columns of $\bfX_\bfL$ are all equal to zero.
\end{Example}

\begin{Corollary}
    \label{cor:propertiesReesLadder} 
    Let $L=I_n(\bfX_{\bfL})$ be the ideal generated by the maximal minors of the ladder matrix $\bfX_{\bfL}$, and $r$ be a positive integer. Then the blowup algebra $\mathcal{R}(\bigoplus_{i=1}^{r}L)$ and $\mathcal{F}(\bigoplus_{i=1}^{r}L)$ are Koszul Cohen--Macaulay normal domains. In particular, these two blowup algebras have rational singularities if $\Char\mathbb{K} = 0$, and they are F-rational if $\Char(\mathbb{K})>0$. Furthermore, the powers of $L$ have a linear resolution.
\end{Corollary}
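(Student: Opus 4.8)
The plan is to read off every asserted property from three facts already established: the quadratic Gr\"obner bases of \Cref{thm:reesLadder}, the \textsc{Sagbi} bases of \Cref{thm:sagbiLifts}, and the corresponding statements for the \emph{initial} algebras $\calR(\bigoplus_{i=1}^r\ini_\tau(L))$ and $\calF(\bigoplus_{i=1}^r\ini_\tau(L))$ in \Cref{thm:InitialRees} and \Cref{CMFiber}.

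First I would dispatch the formal items. The Gr\"obner bases furnished by \Cref{thm:reesLadder} --- the Eagon--Northcott relations \eqref{eqn:LinSyzLadder} and the Pl\"ucker relations \eqref{eqn:PluckerLadder} for $\calR(\bigoplus_{i=1}^rL)$, and the Pl\"ucker relations alone for $\calF(\bigoplus_{i=1}^rL)$ --- consist entirely of homogeneous quadrics (in each monomial an $x$-variable times a $T$-variable, or a product of two $T$-variables). Hence both algebras are $G$-quadratic, and therefore Koszul. Specializing $r=1$ makes $\calR(L)$ Koszul, so Blum's criterion \cite[Corollary 3.6]{Blum} (equivalently \cite[Theorem 3.4]{BCresPowers} with all summands equal to $L$) gives the linear resolutions of the powers of $L$. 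That both algebras are domains is immediate from their realization as subalgebras $R[Lt_1,\dots,Lt_r]\subseteq R[t_1,\dots,t_r]$ and $\KK[Lt_1,\dots,Lt_r]\subseteq R[t_1,\dots,t_r]$.

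The substance lies in Cohen--Macaulayness, normality, and the singularity statements, which I would obtain by transferring properties across the flat \textsc{Sagbi} degeneration. By \Cref{thm:sagbiLifts} the natural generators of $\calR(\bigoplus_{i=1}^rL)$, resp.\ of $\KK[Lt_1,\dots,Lt_r]$, form a \textsc{Sagbi} basis whose initial algebra is $\calR(\bigoplus_{i=1}^r\ini_\tau(L))$, resp.\ $\KK[\ini_\tau(L)t_1,\dots,\ini_\tau(L)t_r]\cong\calF(\bigoplus_{i=1}^r\ini_\tau(L))$; such a \textsc{Sagbi} basis produces a flat one-parameter family over $\mathbb{A}^1_\KK$ with the initial algebra as the fibre over $0$ and the original algebra as every other fibre (see \cite[\S2]{CHV96}). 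By \Cref{thm:InitialRees} and \Cref{CMFiber} these special fibres are Cohen--Macaulay normal domains, with rational singularities if $\Char\KK=0$ and $F$-rational if $\Char\KK>0$. I would then invoke that Cohen--Macaulayness and normality descend from the special to the general fibre of such a family, and that \cite[Corollary 2.3]{CHV96} does the same for the property of having rational singularities in characteristic zero and for $F$-rationality in positive characteristic; applying this to the two families concludes the proof.

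The main obstacle --- and the only step that is not essentially formal --- is precisely this last transfer. The special fibres are toric, so their Cohen--Macaulayness, normality, and singularity type flow from Hochster's theorems together with the direct-summand property, exactly as in the proof of \Cref{CMFiber}; but the general fibres are not toric, and pulling rational singularities and $F$-rationality down to them relies on the deformation behaviour of these invariants (Elkik's openness theorem for rational singularities, and the deformation theorem for $F$-rationality). This is exactly the content packaged in the \textsc{Sagbi}-deformation machinery of \cite{CHV96} that we invoke, so the step is available but must be cited rather than reproduced.
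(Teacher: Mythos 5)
Your proposal is correct and follows essentially the same route as the paper: the paper's proof is a one-line citation of \cite[Corollary 2.3]{CHV96} together with Theorems \ref{thm:InitialRees}, \ref{thm:sagbiLifts} and \ref{thm:reesLadder}, i.e.\ exactly the transfer of Cohen--Macaulayness, normality and the singularity type across the \textsc{Sagbi} degeneration from the initial algebras, with Koszulness read off from the quadratic Gr\"obner bases and the linear resolutions from Blum's criterion, as you describe. Your write-up simply spells out the details that the paper leaves implicit.
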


\begin{proof}
    The first two conclusions follow directly from the application of \cite[Corollary 2.3]{CHV96} in conjunction with Theorems \ref{thm:InitialMultiRees} and \ref{thm:sagbiLifts}. 
    The final one follows from \cite[Corollary 3.6]{Blum}, since $\mathcal{R}(L)$ is Koszul.
\end{proof}

\section{Multi-fiber of unit interval ideals}
The blowup algebras of the unit interval ideal have been previously explored in \cite{ALL1}. In this final section, we aim to demonstrate that its analysis
aligns seamlessly with the framework established in our current investigation.
As previously stated, let $\bfX = (x_{i,j})$ be an $n\times m$ matrix of indeterminates where $n\leq m$. Furthermore, let $R = \mathbb{K}[\bfX]$ be the polynomial ring over a field $\mathbb{K}$ in the indeterminates $x_{i,j}$, with the term order $\tau$ specified in \Cref{lexOrder}.

\begin{Setting}
    \label{def:unit} 
    \begin{enumerate}[a]
        \item In the case where $1\le u<v\le m$, 
            the ideal $I_n(\bfX_{[u,v]})$ is defined as the ideal of $R$ generated by the $n$-minors of the submatrix 
            \[
                \begin{bmatrix}
                    x_{1,u} & x_{1,u+1} & \cdots & x_{1,v}\\
                    x_{2,u} & x_{2,u+1} & \cdots & x_{2,v}\\
                    \vdots & \vdots & & \vdots \\
                    x_{n,u} & x_{n,u+1} & \cdots & x_{n,v}
                \end{bmatrix}.
            \]

        \item In the following, we will assume that $[1,m]$ is partitioned as $\bigcup_{i=1}^s \calI_i$, where $\calI_i=[u_i, v_i]$ for $i\in [s]$. Without loss of generality, we may suppose that this representation is irredundant, meaning no interval $\calI_i$ is contained within another interval $\calI_j$ for $i\ne j$. Consequently, it can be assumed that 
            \begin{align*}
                1=u_1< u_2 < \cdots < u_s<m \qquad
                \text{and} \qquad
                1<v_1<v_2<\cdots <v_s=m.
            \end{align*}

        \item The 
            ideal $U=\sum_{i=1}^s I_n(\bfX_{\calI_i})$ in $R$ will be referred to as a \emph{unit interval determinantal ideal}. Additionally, the \emph{associated column index set} is defined as:
            \[
                \calU\coloneqq \{(a_1,\ldots,a_n)\in \calD \mid \text{$a_1,a_n\in \calI_i$ simultaneously for some $i\in [s]$} \}.
            \]
            If we continue to let $[\bda]$ denote
            the maximal minor of $\bfX$ spanned by the columns corresponding to the tuple $\bda=(a_1,\dots,a_n)\in \calD$, then 
            $U$ is generated by all $[\bda]$ with $\bda\in \calU$.
    \end{enumerate} 
\end{Setting}

\begin{Lemma}
    [{\cite[Lemmas 3.4 and 4.1]{ALL1}}]
    \label{minMaxExist}
    \label{UnitPluckersExist}
    \begin{enumerate}[a]
        \item The subset $\calU$ is closed under the lattice operations of $\calD$, making it a sublattice of $\calD$. In other words, if $\bda$ and $\bdb$ are two incomparable elements in $\calU$, then both the meet $\bdc\coloneqq \bda\wedge \bdb$ and the join $\bdd\coloneqq \bda\vee \bdb$ in $\calD$ actually belong to $\calU$.
        \item Furthermore, let  $\bde$ and $\bdg$ be the vibrations of the incomparable elements $\bda$ and $\bdb$ in $\calU$, as previously defined in \Cref{PluckersExist}. Then, both $\bde$ and $\bdg$ are elements of 
            $\calU$. 
    \end{enumerate}
\end{Lemma}

Building upon the same line of reasoning presented in \Cref{FiberLadder} and \Cref{CMFiber}, we arrive at the following theorem by applying \Cref{minMaxExist}.

\begin{Theorem}
    \label{FiberUnit}
    Let $U$ be a unit interval determinantal ideal, and $\ini_{\tau} (U)$ be its initial ideal. Then the set $\calG_{\calU\times [r]}$ of nontrivial Hibi relations constitutes a Gr\"{o}bner basis of the presentation ideal of the fiber ring $\calF(\bigoplus_{i=1}^{r} \ini_{\tau} (U))$. In particular, $\calF(\bigoplus_{i=1}^{r} \ini_{\tau} (U))$ is a Koszul Cohen--Macaulay normal domain. Furthermore,  if the field $\KK$ has characteristic zero, then $\calF(\bigoplus_{i=1}^{r} \ini_{\tau} (U))$ has rational singularities; if $\KK$ has positive characteristic, then $\calF(\bigoplus_{i=1}^{r} \ini_{\tau} (U))$ is $F$-rational.
\end{Theorem}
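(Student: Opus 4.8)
The plan is to reproduce the proof of \Cref{FiberLadder} almost verbatim, with \Cref{minMaxExist} playing the role that \Cref{LexClosed} plays in the ladder case, and then to copy the argument of \Cref{CMFiber} for the structural conclusions. First observe that $\calF(\bigoplus_{i=1}^r \ini_\tau(U))$ is canonically the $\KK$-subalgebra of $\KK[\bfX][t_1,\dots,t_r]$ generated by the monomials $x_{1,c_1}\cdots x_{n,c_n}t_i$ with $(c_1,\dots,c_n)\in\calU$ and $i\in[r]$; this is a sub-collection of the monomials generating $\calF(\bigoplus_{i=1}^r \ini_\tau(I))$ in \Cref{FiberofOneGen}, indexed now by $\calU\times[r]\subseteq\calD\times[r]$. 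Hence its presentation ideal $\calK_\calU$ is the toric ideal of the corresponding subconfiguration, i.e.\ the contraction of the presentation ideal $\calK$ of $\calF(\bigoplus_{i=1}^r \ini_\tau(I))$ to $\KK[T_\bda:\bda\in\calU\times[r]]$. By \Cref{FiberofOneGen} the quadratic binomials $\calG'$ form a Gr\"obner basis of $\calK$ with respect to $\sigma$; by the restriction principle for Gr\"obner bases of toric ideals \cite[Proposition 4.13]{Sturmfels}, those binomials of $\calG'$ both of whose monomials are supported on $\calU\times[r]$ form a Gr\"obner basis of $\calK_\calU$. The point of \Cref{minMaxExist}(a) is that this subfamily has the same leading terms as $\calG'_{\calU}$: for every support class realized by a tableau with rows in $\calU\times[r]$ the standardization stays in $\calU\times[r]$, so no non-standard leading monomial is lost, and a short bookkeeping argument as in \cite[Proposition 2.1]{DeNegri} then shows that $\calG'_{\calU}$ itself is a Gr\"obner basis of $\calK_\calU$. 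Since the leading terms of $\calG'_{\calU}$ are squarefree quadratic monomials, $\calF(\bigoplus_{i=1}^r \ini_\tau(U))$ is $G$-quadratic and therefore Koszul.

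The remaining structural statements follow exactly as in the proof of \Cref{CMFiber}. By \Cref{StandardProp} every leading monomial $T_\bda T_\bdb$ of $\calG'_{\calU}$ has $\bda\ne\bdb$, so $\ini_\sigma(\calK_\calU)$ is squarefree and $\calF(\bigoplus_{i=1}^r\ini_\tau(U))$ is normal by \cite[Proposition 13.15]{Sturmfels}. It is isomorphic to the affine semigroup ring $\KK[\ini_\tau(U)t_1,\dots,\ini_\tau(U)t_r]$, hence Cohen--Macaulay by \cite[Theorem 1]{Hochster}; being normal it is a direct summand of a polynomial ring by \cite[Proposition 1]{Hochster}, so Boutot's theorem \cite{Boutot} yields rational singularities when $\Char\KK=0$, and the Hochster--Huneke result \cite{MR1044348} yields strong $F$-regularity, in particular $F$-rationality, when $\Char\KK>0$ (one may also package the last step through \cite[Corollary 2.3]{CHV96}, as in \Cref{CMFiber}).

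The only genuine obstacle is the combinatorial input isolated in \Cref{minMaxExist}(a): one must know that passing from $\calD\times[r]$ to $\calU\times[r]$ does not destroy the standard-monomial structure, i.e.\ that the standardization of a pair of tuples with rows in $\calU\times[r]$ has its rows in $\calU\times[r]$ again. This is the unit-interval analogue of \Cref{LexClosed}, and its proof (the interval case analysis with $i_a\le i_b$) is the combinatorial heart of the section. Granting that lemma, the rest is a mechanical transfer of the ladder argument; in particular one must be sure that restricting $\calG'$ to $\calU\times[r]$ introduces no monomial relations, which is precisely what \Cref{minMaxExist}(a) rules out.
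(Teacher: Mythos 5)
Your proposal is correct and follows essentially the same route as the paper: the paper's proof of \Cref{FiberUnit} is precisely "the same line of reasoning as \Cref{FiberLadder}, applying \Cref{minMaxExist}" (i.e.\ \Cref{FiberofOneGen} plus the restriction principle of \cite[Proposition 4.13]{Sturmfels} and \cite[Proposition 2.1]{DeNegri}, with \Cref{minMaxExist} replacing \Cref{LexClosed}), and the structural conclusions are transferred from \Cref{CMFiber} exactly as you describe. You have simply spelled out the details the paper leaves implicit.
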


Notice that the description of the defining equations of $\calF(\bigoplus_{i=1}^{r} \ini_{\tau} (U))$ bears resemblance to the ladder matrix case. However, it is actually constituted by different collections of binomials.

Next, we describe the defining equations of $\mathcal{F}(\bigoplus_{i=1}^{r}U)$. 
Notice that if $\bda,\bdb$ are incomparable elements in $\calU$, then the induced relationships of Pl\"{u}ker type (in the form of \eqref{eqn:PluckerLadder}) belong to the polynomial ring $\KK[T_{\bdu}\mid \bdu\in \calU]$ by \Cref{UnitPluckersExist}.

\begin{Theorem}
    \label{thm:sagbiLiftsUnit}  
    Let $U$ be a unit interval determinantal ideal over the polynomial ring $R=\KK[\bfX]$, and let $\calU$ denote the associated column index set. The polynomials of the set $\{[\bda]t_i \mid (\bda,i)\in \calU\times [r]\} $ form a \textsc{Sagbi} basis of the $\KK$-algebra that they generate, with respect to the monomial order $\tau'$ specified in \Cref{lexOrder}. In particular, 
    \[
        \ini_{\tau'}(\KK[[\bda]t_i \mid (\bda,i) \in \calU\times [r]])= \KK[\ini_{\tau}([\bda])t_i \mid (\bda,i) \in \calU\times [r]].
    \]
    Moreover, the presentation ideal of the fiber ring $\mathcal{F}(\bigoplus_{i=1}^{r}U)$ has a Gr\"obner basis given by the induced Pl\"ucker relations in the forms of \eqref{eqn:PluckerLadder} with respect to some induced monomial order $\omega$ on $\KK[T_{\bdu}\mid\bdu\in\calU]$. Furthermore, $\mathcal{F}(\bigoplus_{i=1}^{r}U)$ is a Koszul normal Cohen--Macaulay domain. Finally, if the field $\KK$ has characteristic zero, then $\calF(\bigoplus_{i=1}^{r} U)$ has rational singularities; if $\KK$ has positive characteristic, then $\calF(\bigoplus_{i=1}^{r} U)$ is $F$-rational.
\end{Theorem}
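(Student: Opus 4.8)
The plan is to run the \textsc{Sagbi} deformation argument exactly as in the two‑sided ladder case, namely the fiber‑ring halves of Theorems~\ref{thm:sagbiLifts} and~\ref{thm:reesLadder} together with Corollary~\ref{cor:propertiesReesLadder}, but with the ladder‑specific inputs replaced by their unit‑interval counterparts. The two structural inputs are Theorem~\ref{FiberUnit}, which supplies the quadratic Gröbner basis $\calG'_\calU$ for the presentation ideal of $\calF(\bigoplus_{i=1}^r\ini_\tau(U))$ (and all the homological properties of that algebra), and Lemma~\ref{minMaxExist}, which guarantees that the minimum, the maximum, and every vibration of a pair of tuples in $\calU\times[r]$ again lies in $\calU\times[r]$. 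Since only the special fiber is at stake here, neither a strong $\ell$‑exchange property nor a Rees‑algebra lift is required; unlike the ladder case no specialization of the matrix entries is needed either, because $U$ is built from honest minors of $\bfX$ and we merely restrict attention to the index set $\calU$.

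First I would establish the \textsc{Sagbi} statement. The maximal minors $|\bda|$, $\bda\in\calD$, form a \textsc{Sagbi} basis of the Grassmannian coordinate ring by \cite[Theorem~6.46]{EHgbBook}; concretely, for a non‑standard pair $[\bda,\bdb]$ with straightened form $[\bdc,\bdd]$ one has the classical straightening identity $|\bda||\bdb|-|\bdc||\bdd|=-\sum\sgn(\iota_{\bde,\bdg})|\bde||\bdg|$ from \cite[Lemma~7.2.3]{brunsherzog}, with $\ini_\tau(|\bde||\bdg|)<_\tau\ini_\tau(|\bda||\bdb|)$ for every term on the right. Tensoring with the auxiliary variables and using that, in the $2\times(n+1)$ straightening, the $(n+1)$‑st column is sorted so that $j_1=\min\{i_1,i_2\}$ and $j_2=\max\{i_1,i_2\}$, I get $t_{i_1}t_{i_2}=t_{j_1}t_{j_2}$; hence the comparison under $>_{\tau'}$ collapses to the $\tau$‑comparison of the $\bfX$‑parts, which is already known. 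By Lemma~\ref{minMaxExist} every $\bde,\bdg$ occurring lies in $\calU\times[r]$, so each tail term $|\bde|t_{j_1}\cdot|\bdg|t_{j_2}$ is a product of two of the candidate generators, and the Plücker binomial $T_{\bda}T_{\bdb}-T_{\bdc}T_{\bdd}\in\calG'_\calU$ lifts to the relation \eqref{eqn:UnitPlucker}. Feeding $\calG'_\calU$ from Theorem~\ref{FiberUnit} into the \textsc{Sagbi} lifting criterion (\cite[Theorem~6.43]{EHgbBook}, cf.~\cite[Theorem~4.2]{BCresPowers}) then shows that $\{|\bda|t_i:(\bda,i)\in\calU\times[r]\}$ is a \textsc{Sagbi} basis of $\KK[Ut_1,\dots,Ut_r]$, whence $\ini_{\tau'}(\KK[Ut_1,\dots,Ut_r])=\KK[\ini_\tau(U)t_1,\dots,\ini_\tau(U)t_r]\cong\calF(\bigoplus_{i=1}^r\ini_\tau(U))$.

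Next I would lift the Gröbner basis and read off the ring‑theoretic properties. Once a \textsc{Sagbi} basis is in hand and $\calG'_\calU$ is a Gröbner basis of the presentation ideal of the initial algebra, \cite[Corollary~11.6]{Sturmfels} produces an induced monomial order $\omega$ on $\KK[\bfT]$ for which the lifts \eqref{eqn:UnitPlucker} form a Gröbner basis of the presentation ideal $\calK$ of $\calF(\bigoplus_{i=1}^r U)$. These relations are quadratic, so $\calF(\bigoplus_{i=1}^r U)$ is $G$‑quadratic, hence Koszul. For the remaining properties, the \textsc{Sagbi} flat degeneration exhibits $\calF(\bigoplus_{i=1}^r\ini_\tau(U))$ as a degeneration of $\calF(\bigoplus_{i=1}^r U)$; by Theorem~\ref{FiberUnit} the former is a Cohen–Macaulay normal domain with rational singularities in characteristic zero and $F$‑rational in positive characteristic, so \cite[Corollary~2.3]{CHV96} (via the results of Hochster, Boutot, and Hochster–Huneke used in the proof of Corollary~\ref{CMFiber}) transfers all of these to $\calF(\bigoplus_{i=1}^r U)$.

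The only genuine obstacle is the compatibility of the straightening relations with the passage from $\calD$ to $\calU$: one must know that no vibration $\bde$ or $\bdg$ gets pushed to a column configuration whose extreme entries lie in different intervals $\calI_i$, which would make $|\bde|$ (or $|\bdg|$) fail to be a generator of $U$ and break the lift. This is precisely what Lemma~\ref{minMaxExist}(b) forbids, so the hard part is already settled; the remaining steps — matching the $t$‑degrees, transporting the initial‑term inequality from the generic case, and invoking \cite[Corollary~11.6]{Sturmfels} and \cite[Corollary~2.3]{CHV96} — are routine.
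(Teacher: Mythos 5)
Your proposal is correct and follows essentially the same route as the paper, which proves this theorem by invoking \Cref{minMaxExist} and repeating the arguments of \Cref{thm:sagbiLifts} and \Cref{CMFiber} (lifting the Pl\"ucker binomials via the classical straightening law, applying the \textsc{Sagbi} lifting criterion, then \cite[Corollary~11.6]{Sturmfels} and \cite[Corollary~2.3]{CHV96}). You correctly identify the one genuinely new ingredient — that \Cref{minMaxExist} keeps all vibrations inside $\calU\times[r]$ — and correctly observe that no strong $\ell$-exchange argument or matrix specialization is needed here since only the special fiber is involved.
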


In the light of \Cref{minMaxExist}, the proof of \Cref{thm:sagbiLiftsUnit} can be derived from the same line of reasoning presented in \Cref{CMFiber} and \Cref{thm:sagbiLifts}, which will be omitted here for brevity.

We conclude this work with the multi-Rees algebras of unit interval determinantal ideals.

\begin{Definition}\label{reesEQ}
    Let $U$ be a unit interval determinantal ideal, and let $\ini_{\tau} (U)$ be its initial ideal. 
    \begin{enumerate}[a]
        \item Suppose that $\bda>\bdb$ are two elements in $\calU$. Let $i_{\bda}\coloneqq \min \{i\mid \bda\in \calI_i\}$ and similarly define $i_{\bdb}$. Assume in addition that $i_{\bda}<i_{\bdb}$ and $\bda \in \calI_{i_a}\backslash \calI_{i_b}$. In this case, we have the \emph{Koszul-type binomial}:
            \begin{equation*}
                \bdx_\bda T_{\bdb}-\bdx_\bdb T_{\bda}. 
            \end{equation*}
            Let $K$ be the collection of all such Koszul-type binomials.  Furthermore, for $k\in [r]$, we have the \emph{Koszul-type polynomial}:
            \begin{equation}\label{eq: KoszulClosedDFI}
                [\bda] T_{\bdb,k}-[\bdb] T_{\bda,k}. 
            \end{equation}

        \item Let $\widetilde{\bdc} = \{c_1, \ldots, c_{n+1}\} \subseteq \calI_i$  for some $i$ where $1\leq c_1<\cdots <c_{n+1}\leq m$.  
           Then by Eagon--Northcott complex, we have an \emph{Eagon--Northcott-type} binomial relation (related to $\calU$):
            \begin{equation*}
                {x_{i c_i}T_{\widetilde{\bdc}\setminus c_i}}-x_{i c_{i+1}}T_{\widetilde{\bdc}\setminus c_{i+1}}.
            \end{equation*}
            Let $H$ be the collection of all such Eagon--Northcott-type binomials.
            Furthermore, for $k\in [r]$, we have the \emph{Eagon-Northcott-type polynomials}:
            \begin{equation}\label{eq: LinSyzClosedDFI}
                \sum_{j\in[n+1]} (-1)^{i+j} x_{i,c_j} T_{\widetilde{\bdc}\setminus c_j,k}.
            \end{equation}

        \item The set $\calG_{\calU}$ of nontrivial Hibi relations is a Gr\"{o}bner basis of the presentation ideal of $\calF(\ini_{\tau} (U))$ by \Cref{FiberUnit}. Let $\calG_{\calU}'\coloneqq\calG_{\calU}\cup K \cup H$ be the set obtained from $\calG_{\calU}$ by supplementing all the {Koszul-type} binomial relations and the {Eagon--Northcott type} binomial relations. By \cite[Theorem 3.14]{ALL1}, 
            $\calG_{\calU}'$ is a Gr\"{o}bner basis of the presentation ideal of $\calR(\ini_{\tau} (U))$ in $\KK[\bfX,T_{\bda} \mid \bda \in \calU]$. 
            
            To study the multi-Rees algebra, we write 
            $\calG_{\calU \times [r]}'\coloneqq\Lift (\calG_{\calU}')$
            where the operation $\Lift$ is defined in \Cref{EQForMulti-ReesInitial}. Furthermore, let 
            $\bfC=(T_{\bda,j})_{\bda\in \calU, j\in [r]}$ be a $|\calU| \times r$ matrix of indeterminates. Denote the set of $2$-minors of $\bfC$ by $\Quad_{\bfC}$.
    \end{enumerate}
\end{Definition}

Using \cite[Theorem 3.14]{ALL1} and \Cref{fibertoRees} along with the same line of reasoning presented in Theorems \ref{thm:InitialRees} and \ref{thm:sagbiLifts}, we have the following result: 
\begin{Theorem}
    \label{ReesUnit}
    Let $U$ be a unit interval determinantal ideal, and $\ini_{\tau} (U)$ be its initial ideal. 
    \begin{enumerate}[a]
        \item  The set $\calG_{\calU\times [r]}'\cup \Quad_{\bfC}$ is a Gr\"{o}bner basis of the presentation ideal of the multi-Rees algebra $\calR(\bigoplus_{i=1}^{r} \ini_{\tau} (U))$. In particular, $\calR(\bigoplus_{i=1}^{r} \ini_{\tau} (U))$ is a Cohen--Macaulay normal domain. Furthermore,  if the field $\KK$ has characteristic zero, then $\calR(\bigoplus_{i=1}^{r} \ini_{\tau} (U))$ has rational singularities; if $\KK$ has positive characteristic, then $\calR(\bigoplus_{i=1}^{r} \ini_{\tau} (U))$ is $F$-rational.
        \item    The polynomials of the set $\bfX \cup \{[\bda]t_i \mid (\bda,i)\in \calU\times [r]\} $ form a \textsc{Sagbi} basis of the $\KK$-algebra that they generate, with respect to the monomial order $\tau'$ specified in \Cref{lexOrder}. In particular, 
            \[
                \ini_{\tau'}(\KK[\bfX \cup \{[\bda]t_i \mid (\bda,i) \in \calU\times [r]\}])= \KK[\bfX \cup \{\ini_{\tau}([\bda])t_i \mid (\bda,i) \in \calU\times [r]\}.
            \]
            Moreover, the presentation ideal of $\mathcal{R}(\bigoplus_{i=1}^{r}U)$ has a Gr\"obner basis given by the induced Pl\"ucker relations in the form of \eqref{eqn:PluckerLadder}, Koszul-type polynomials in the form of \eqref{eq: KoszulClosedDFI}, and Eagon--Northcott type polynomials in the form of \eqref{eq: LinSyzClosedDFI}, with respect to some induced monomial order $\omega$ on $\KK[T_{\bdu}\mid\bdu\in\calU]$.
            Furthermore, $\mathcal{R}(\bigoplus_{i=1}^{r}U)$ is a normal Cohen--Macaulay domain. Finally, if the field $\KK$ has characteristic zero, then $\calR(\bigoplus_{i=1}^{r} U)$ has rational singularities; if $\KK$ has positive characteristic, then $\calR(\bigoplus_{i=1}^{r} U)$ is $F$-rational.
    \end{enumerate} 
\end{Theorem}

\begin{acknowledgment*}
    The second author is partially supported by the ``Innovation Program for Quantum Science and Technology'' (2021ZD0302902) and ``the Fundamental Research Funds for the Central Universities''.
\end{acknowledgment*}

\noindent\textbf{Declaration of generative AI and AI-assisted technologies in the writing process:}
During the preparation of this work the authors used ChatGPT in order to improve readability and language of the work. After using this tool, the authors reviewed and edited the content as needed and take full responsibility for the content of the publication.

\bibliography{ReesBib}
\end{document}